\newcommand{\chuhao}{\fontsize{19pt}{\baselineskip}\selectfont}
\numberwithin{equation}{section}
 \newtheorem{theorem}{Theorem}[section]
 \newtheorem{lemma}{Lemma}[section]
 \newtheorem{remark}{Remark}[section]
 \newtheorem{example}{Example}[section]
 \newtheorem{corollary}{Corollary}[section]
 \newtheorem{definition}{Definition}[section]
\title{\bf\color{black} \chuhao{Hoeffding's inequality for continuous-time Markov chains}}
\author{Jinpeng Liu
\thanks{School of Mathematics and Statistics, Xidian University, Xi'an, Shanxi, 710071, P.R. China, E-mail: liujinpeng123@xidian.edu.cn.
\newline \indent \hspace*{-2.5mm}
$^{**}$ School of Mathematics and Statistics, HNP-LAMA, Central South University, Changsha, Hunan, 410083, P.R. China, E-mail: liuyy@csu.edu.cn.
\newline \indent \hspace*{-2.5mm}
$^{***}$ School of Information Science and Engineering, Hunan Women's University, Changsha, Hunan, 410004, P.R. China, E-mail: zhoulin@hnwu.edu.cn.
}\ \ , Yuanyuan Liu$^{**}$\ \ and\ \ Lin Zhou$^{***}$}
\date{\today}
\begin{document}
 \maketitle
\maketitle
\begin{abstract}
Hoeffding's inequality is a fundamental tool widely applied in probability theory, statistics, and machine learning.
In this paper, we establish Hoeffding's inequalities specifically tailored for an irreducible and positive recurrent continuous-time Markov chain (CTMC) on a countable state space with the invariant probability distribution ${\pi}$ and an $\mathcal{L}^{2}(\pi)$-spectral gap ${\lambda}(Q)$.
More precisely, for a function $g:E\to [a,b]$ with a mean $\pi(g)$, and given $t,\varepsilon>0$, we derive the inequality
\[
\mathbb{P}_{\pi}\left(\frac{1}{t} \int_{0}^{t} g\left(X_{s}\right)\mathrm{d}s-\pi (g) \geq \varepsilon \right) \leq \exp\left\{-\frac{{\lambda}(Q)t\varepsilon^2}{(b-a)^2} \right\},
\]
which can be viewed as a generalization of Hoeffding's inequality for discrete-time Markov chains (DTMCs) presented in 
[\textit{J. Fan et al.,  J. Mach. Learn. Res., 22(2022), pp. 6185-6219}] to the realm of CTMCs. 
The key analysis enabling the attainment of this inequality lies in the utilization of the techniques of skeleton chains and augmented truncation approximations.
Furthermore, we also discuss Hoeffding's inequality for a  jump process on a general state space.

\vskip 0.2cm
\noindent \textbf{Keywords:}\ \ Continuous-time Markov chains;  Hoeffding's inequality;  Spectral gap
%\vskip 0.2cm
%\noindent {\bf AMS 2010 Subject Classification:} \ \ 60J05, 60J25, 60F10.
\end{abstract}

\section{Introduction}

Classical Hoeffding's inequality \cite{Hoeffding1963Probability} provides a specific upper bound on the deviation between the empirical
mean of a series of independent and identically distributed (i.i.d.) random variables and their actual expected value.
To be specific, for i.i.d. random variables $\{X_i$, $i\geq0\}$ such that $\mathbb{P}(a\leq{X_i}\leq{b})=1$, where $a$ and $b$ are a pair of real numbers,
Hoeffding's inequality gives us
\[\mathbb{P}\left(\frac{1}{n}\sum_{i=0}^{n-1}X_i-\mathbb{E}[X_0]\geq\varepsilon\right)\leq\exp\left\{\frac{-2n\varepsilon^2}{(b-a)^2}\right\},\]
where $\varepsilon$ is a positive real number.
It can be seen that Hoeffding's inequality guarantees that the probability of the large deviation between the sample mean and the actual expected value will decrease exponentially with the increase of the sample size. Based on this, Hoeffding's inequality has been widely used in various fields, especially in probability limit theory, statistical learning theory, machine learning and information theory;
see, \cite{Devroye1996,Bai&Lin2009,Boucheron2013} and  their references therein.

Drawing inspiration from the diverse applications of Hoeffding's inequality, researchers have developed various generalizations of this powerful tool.
One fundamental generalization involves extending Hoeffding's inequality to Markov processes, thereby removing the assumption of independence among random variables.
This extension allows for the application of Hoeffding's inequality in a range of fields, including Markov chain Monte Carlo (MCMC) algorithms, time series analysis, and multi-armed bandit problems with Markovian rewards; see, e.g., \cite{WOS:000178279400005,2006Hidden,2010Optimal,bubeck2012regret,fan2021hoeffding}.

There is a wealth of literature available on Hoeffding's inequality for DTMCs.
For example, Glynn and Ormoneit \cite{glynn2002hoeffding} employed the minorization and drift conditions to establish a Hoeffding inequality for uniformly ergodic DTMCs.
In the same setting, Boucher \cite{boucher2009hoeffding} utilized the Drazin inverse and obtained similar results to those in the aforementioned study.
More recently, Liu and Liu \cite{liu&liu2021} derived Hoeffding's inequality for DTMCs via the solution of Poisson's equation, thereby eliminating the need for the aperiodic assumption made in the previous work \cite{glynn2002hoeffding,boucher2009hoeffding}.
On a different note, Dedecker and Gou{\"e}zel \cite{Dedecker2014SubgaussianCI} employed the coupling technique to derive a Hoeffding inequality specifically for geometrically ergodic DTMCs.
Building upon this work, Wintenberger \cite{Wintenberger2017} extended the results from Dedecker and Gou{\"e}zel \cite{Dedecker2014SubgaussianCI} to encompass unbounded functions.
Additionally, Paulin \cite{2015Concentration} developed Marton coupling techniques to derive McDiarmid's inequality for DTMCs with a finite mixing time,
which serves as a generalization of Hoeffding's inequality by replacing the sum of random variables with a function of  random variables.

The spectral gap is another crucial tool for studying Hoeffding's inequality for Markov processes.
Gillman \cite{Gillman1993} was the first to utilize the spectral gap to establish a Hoeffding inequality for a reversible DTMC on a finite state space.
Building upon this initial work, Dinwoodie \cite{Dinwoodie1995} and L{\'e}zaud \cite{1998Chernoff} further refined the Hoeffding bound in reference \cite{Gillman1993} using different techniques.
As a significant contribution, Le{\'o}n and Perron \cite{a2004optimal} derived an optimal Hoeffding inequality via the spectral gap.
Miasojedow \cite{miasojedow2014hoeffding} then extended the result of Le{\'o}n and Perron \cite{a2004optimal} to geometrically ergodic DTMCs on a general state space, eliminating the requirement of invertibility.
More recently, Fan et al. \cite{fan2021hoeffding} developed a time-dependent functional Hoeffding inequality based on the work of Le{\'o}n and Perron \cite{a2004optimal} and Miasojedow \cite{miasojedow2014hoeffding}.
Additionally, there are other Hoeffding-type inequalities constructed using methods such as Stein's method, regeneration techniques, information-theoretical ideas, and others,
as detailed in reference \cite{2006Stein,2012Exponential,2020Transport}.

In the realm of MCMC algorithms, it is an increasing need to deal with cases involving CTMCs,
and there is empirical evidence that these continuous-time MCMC algorithms are more efficient than their discrete-time counterparts; see, e.g., \cite{bouchard2018bouncy,fearnhead2018piecewise,bierkens2018piecewise}.
Moreover, additional algorithms, including diffusion Metropolis-Hastings type algorithms \cite{roberts1996geometric} and time-invariant estimating equations \cite{baddeley2000time},
also rely on the use of concentration inequalities in the context of CTMCs to control approximation errors.
Consequently, there is a compelling need to develop Hoeffding's inequality specifically for CTMCs.
Choi and Li \cite{choi2019hoeffding} built upon the techniques introduced in the works \cite{glynn2002hoeffding} and \cite{boucher2009hoeffding}
to derive a Hoeffding inequality for uniformly ergodic diffusion processes.
Liu and Liu \cite{liu&liu2021}, on the other hand, extended the results of \cite{choi2019hoeffding} to encompass general continuous-time Markov processes by employing the solution of Poisson's equation.
Additionally, there are other concentration inequalities available for CTMCs; see, e.g., \cite{1998Chernoff,lezaud2001chernoff,guillin2009transportation}.

However, the body of research on Hoeffding's inequality for CTMCs is still relatively sparse compared to the extensive results available for DTMCs.
In this paper, we will utilize the spectral gap to describe Hoeffding's inequality for CTMCs,
which serves as a parallel result to the discrete-time case presented in Fan et al. \cite{fan2021hoeffding},
but the two inequalities display subtle difference in the presentation of its upper bounds, as shown in Theorems \ref{main result} and \ref{main result dtmc}.
Since Theorem \ref{main result dtmc} characterizes the optimal Hoeffding's inequality for DTMCs using the spectral gap, see, e.g., \cite{a2004optimal,fan2021hoeffding},
this implies that our result represents the optimal Hoeffding's inequality for continuous-time case.
A comparison with existing results can be found in Remark \ref{remark-1}.
Meanwhile, we have to note that there is an additional assumption that $\bar{Q}$ is regular in Theorems \ref{main result},
which result from essential difference between CTMCs and DTMCs: for a CTMC with an unbounded $Q$-martix $Q$,
the regularity assumption on $Q$ is usually needed to guarantee the uniqueness of  the $Q$-process.

The remainder of this paper is organized as follows.
In Section \ref{sec 2}, we provide the necessary background and introduce our version of Hoeffding's inequality.
Section \ref{sec 3} presents the proof of our main result, utilizing two key tools: the skeleton chain and the collapsed chain.
Finally, in Section \ref{sec 4}, we discuss Hoeffding's inequlity for jump processes on general state spaces.

\section{Main Results}\label{sec 2}

As preparation for presenting theorems, we introduce some notations as follows.
Let $\{X_t,t\geq0\}$ be an irreducible and time-homogeneous CTMC on a countable state space $E$, and let $P_t=(P_t(i,j))_{i,j\in E}$ be the transition function of $X_t$.
We focus on the case that $X_t$ is a $Q$-process with a totally stable and regular $Q$-matrix $Q=(Q(i,j))_{i,j\in E}$.
Assume that $X_t$ is positive recurrent with the unique invariant probability distribution ${\pi}$.
Furthermore, we  assume that  the state space $E$ is equipped with a {$\sigma$}-algebra $\mathcal{B}$ and a $\sigma$-finite measure $\pi$ on $(E,\mathcal{B})$.
Let  $\mathcal{L}^{2}(E, \mathcal{B},\pi):=\{f : \pi(f^2)<\infty\}$ be the set of all square-integrable functions on $(E, \mathcal{B}, \pi)$.
For convenience, we write $\mathcal{L}^{2}(E, \mathcal{B},\pi)$ as $\mathcal{L}^{2}(\pi)$.
It is known that $\mathcal{L}^{2}(\pi)$ is the Hilbert space equipped with the following inner product and norm:
\[
	\left\langle f, h\right\rangle_{\pi}:=\pi(fh)=\sum_{i\in E}\pi(i)f(i)h(i), \ \ \forall f, h \in \mathcal{L}^{2}(\pi),
\]
\[
\|f\|_{\pi,2}:=\sqrt{\left\langle f, f\right\rangle}_{\pi}=(\pi(f^2))^{\frac{1}{2}},\ \  \forall f \in \mathcal{L}^{2}(\pi).
\]

For a transition function $P_t$ with the invariant measure $\pi$, it can be extended uniquely to $\mathcal{L}^{2}(\pi)$ as a strongly continuous contraction semigroup,
which acts to the right on functions, i.e.,
\[P_tf(i)=\sum_{j\in E}P_t(i,j)f(j), \ \ \forall i \in E,\ \forall f \in \mathcal{L}^{2}(\pi).\]
According to the ordinary semigroup theory, $P_t$ deduces an infinitesimal generator, which is denoted by $\mathcal{A}$. That is,
for any $f \in \mathcal{L}^{2}(\pi)$, if we have
\begin{equation}\label{infinitestimal}
\lim_{t\rightarrow 0^{+}}\frac{P_tf-f}{t}=g,
\end{equation}
and $g \in \mathcal{L}^{2}(\pi)$, then we define $\mathcal{A}f=g$.
Such functions $f$ consist of the domain of $\mathcal{A}$, denoted by $\mathcal{D}(\mathcal{A})$.

In Chapter 9 of \cite{Chen2004}, the $\mathcal{L}^{2}(\pi)$-spectral gap of $X_t$, denoted as $\lambda(Q)$ for convenience, is defined as follows.
\begin{definition}\label{spectral-gap}
The chain $X_t$ is said to admit an $\mathcal{L}^{2}(\pi)$-spectral gap $\lambda(Q)$ if
\[
		\lambda(Q):=\inf \left\{-\langle	{\mathcal{A}} f,f \rangle_{\pi}: f\in \mathcal{D}({\mathcal{A}}),\ \|f\|_{\pi,2}=1,\ \pi(f)=0 \right\}>0.
\]
\end{definition}

We call a CTMC $X_t$ is (time) reversible if $P_{t}$ satisfies
\[
  \pi(i)P_t(i,j)=\pi(j)P_t(j,i),  \ \ \forall i,j\in E, \ \ t\geq 0,
\]
which is equivalent to
\[
  \pi(i)Q(i,j)=\pi(j)Q(j,i),  \ \ \forall i,j\in E.
\]

In the case of that $X_{t}$ is reversible and the state space $E$ is finite, we have
\begin{equation}\label{eigenvalue}
\lambda(Q)=\lambda_1,
\end{equation}
where $\lambda_i$ is the $i\text{th}$ smallest non-trivial eigenvalue of $-Q$.
For the irreversible case, we have the following symmetrizing procedure.
Let $\hat{P}_t=(\hat{P}_{t}(i,j))_{i,j\in E}$ be the dual of $P_t$, i.e.,
\[\hat{P}_{t}(i,j)=\frac{\pi(j)P_{t}(j,i)}{\pi(i)},\ \ \forall i,j\in E.\]
It deduces the adjoint operator $\hat{\mathcal{A}}$ of $\mathcal{A}$ and the dual $Q$-matrix $\hat{Q}=\big(\hat{Q}(i,j)\big)_{i,j\in E}$, given by
\begin{equation}\label{symmetrizing procedure-1}
	    \hat{Q}(i,j)=\frac{\pi(j)Q(j,i)}{\pi(i)}, \ \ \forall i,j\in E.
\end{equation}
It is easy to check that $\hat{Q}$ has the same invariant distribution $\pi$ as $Q$, and  that  $\hat{Q}$ is regular if $Q$ is regular, see Theorem 4.70 in \cite{Chen2004}.
Furthermore, $\mathcal{A}$ and  $\hat{\mathcal{A}}$ lead to a self-adjoint operator $\bar{\mathcal{A}}=({\mathcal{A}+\hat{\mathcal{A}}})/{2}$.
Similarly, we obtain a reversible $Q$-matrix $\bar{Q}=\big(\bar{Q}(i,j)\big)_{i,j\in E}$ with respective to the same  stationary distribution $\pi$ as follows
\begin{equation}\label{symmetrizing procedure-2}
	   \bar{Q}(i,j)=\frac{Q(i,j)+\hat{Q}(i,j)}{2}, \ \ \forall i,j\in E.
\end{equation}
According to Corollary 9.3 and Theorem 9.12 in 	\cite{Chen2004}, we know that
\begin{equation}\label{bar{q}}
\lambda(Q)=\lambda(\bar{Q}).
\end{equation}

Our main result is the following theorem, which yields a Hoeffding type inequality for CTMCs with the $\mathcal{L}^{2}(\pi)$-spectral gap $\lambda(Q)$.

\begin{theorem}\label{main result}
Assume that the chain $X_t$ admits an $\mathcal{L}^{2}(\pi)$-spectral gap $\lambda(Q)$, and that the $Q$-matrix $\bar{Q}$ is regular.
Then for any bounded function $g:E\to[a,b]$, and $t, \varepsilon >0$, we have
\[
		\mathbb{P}_{\pi}\left(\frac{1}{t} \int_{0}^{t} g\left(X_{s}\right)\mathrm{d}s-\pi (g) \geq \varepsilon \right) \leq \exp\left\{-\frac{\lambda(Q)t\varepsilon^2}{(b-a)^2} \right\}.
\]
\end{theorem}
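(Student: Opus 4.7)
The plan is to derive Theorem \ref{main result} by reducing to the discrete-time Hoeffding inequality (Theorem \ref{main result dtmc}) through an $h$-skeleton chain $Y_k^h := X_{kh}$, then letting the mesh $h \to 0^+$. The two enabling tools, both advertised in the abstract, are the spectral mapping that ties the skeleton's spectral gap to $\lambda(Q)$, and an augmented truncation that imports Theorem \ref{main result dtmc} onto a countable state space.

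Step 1 (Skeleton). Fix $h > 0$ and set $n := \lfloor t/h \rfloor$. Then $\{Y_k^h\}_{k \geq 0}$ is a DTMC with transition matrix $P_h = e^{hQ}$ and invariant distribution $\pi$. Applying the spectral mapping theorem to the strongly continuous self-adjoint contraction semigroup generated by $\bar{\mathcal{A}}$, and combining with (\ref{bar{q}}), the $\mathcal{L}^2(\pi)$-spectral gap of $P_h$ equals $1 - e^{-h\lambda(Q)}$.

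Step 2 (Truncate and apply DTMC Hoeffding). Pick a nested sequence of finite subsets $E_N \uparrow E$ and form augmented truncations $\bar{Q}^{(N)}$ of $\bar{Q}$; these remain reversible and regular by construction, and standard truncation results supply $\lambda(\bar{Q}^{(N)}) \to \lambda(Q)$. Applying Theorem \ref{main result dtmc} to the $h$-skeleton of each finite-state chain and then sending $N \to \infty$ would deliver
\[
\mathbb{P}_{\pi}\!\left(\frac{1}{n}\sum_{k=0}^{n-1} g(X_{kh}) - \pi(g) \geq \varepsilon\right) \leq \exp\!\left\{-\frac{n\bigl(1 - e^{-h\lambda(Q)}\bigr)\varepsilon^{2}}{(b-a)^{2}}\right\}.
\]

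Step 3 (Send $h \to 0^+$). Sample paths of $X_t$ are right-continuous step functions with a.s.\ finitely many jumps on $[0,t]$, so boundedness of $g$ yields $\frac{1}{n}\sum_{k=0}^{n-1} g(X_{kh}) \to \frac{1}{t}\int_0^t g(X_s)\,\mathrm{d}s$ almost surely. Combined with Fatou's lemma applied to the event $\{\,\cdot\, > \varepsilon - \eta\}$ for arbitrarily small $\eta > 0$, and with the scaling $n(1 - e^{-h\lambda(Q)}) = (t/h)(1 - e^{-h\lambda(Q)}) \to t\lambda(Q)$, the bound in Step 2 transfers to the continuous observable; letting $\eta \to 0^+$ finishes the argument.

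The main obstacle is the truncation step: one must verify that the augmented truncations stay regular and irreducible on each finite level (so that Theorem \ref{main result dtmc} genuinely applies), that their spectral gaps really converge up to $\lambda(Q)$ with no gap loss, and that the DTMC Hoeffding bound survives the double limit $N \to \infty$ followed by $h \to 0^+$ without degrading the constant $1/(b-a)^{2}$ in the exponent. The Riemann-sum convergence and the clean asymptotic $n(1 - e^{-h\lambda(Q)}) \to t\lambda(Q)$ that manufactures the correct continuous-time constant are routine once the truncation framework is in place.
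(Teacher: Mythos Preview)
Your strategy---skeleton chain, apply Theorem~\ref{main result dtmc}, truncate, pass to the limit---is the paper's, but you sequence the pieces differently and your Step~1 contains a real error.

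The spectral mapping theorem for the semigroup generated by $\bar{\mathcal{A}}$ tells you about $e^{h\bar{\mathcal{A}}}$, not about $P_h=e^{h\mathcal{A}}$; these coincide only when $Q$ is reversible. The quantity $\lambda(P_h)$ in Definition~\ref{spectral-gap-DTMC} is controlled by the additive symmetrization $(P_h+\hat P_h)/2$, which is in general \emph{not} $e^{h\bar Q}$, so your equality $1-\lambda(P_h)=1-e^{-h\lambda(Q)}$ is unjustified. The paper circumvents this by truncating \emph{first}: on a finite state space the honest matrix expansion $P^{t/k}=I+(t/k)Q+o(1/k)$ holds, whence $\lambda(P^{t/k})=1-(t/k)\lambda(Q)+o(1/k)$ directly from the definitions, and the skeleton limit $k\to\infty$ delivers the finite-state bound (Theorem~\ref{theroem-1}). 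The collapsed-chain truncation (Lemmas~\ref{3-1}--\ref{3-2}, which is where the regularity of $\bar Q$ is actually used) then transports that inequality to infinite $E$.

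Two smaller points. Your Step~2 is mis-motivated: Theorem~\ref{main result dtmc} is already stated on countable $E$, so nothing needs ``importing''; in the paper the truncation exists solely because the Taylor expansion of $P^{t/k}$ fails for unbounded $Q$. And your displayed exponent in Step~2 has silently dropped the factor $2/(1+\lambda(P_h))$ from Theorem~\ref{main result dtmc}; it tends to $1$ as $h\to 0$, so the limit is unaffected, but it should be carried. If you want to keep your ordering, the clean fix is to replace Step~1 by the one-sided bound $\lambda(P_h)\le e^{-h\lambda(Q)}$, which follows from the $L^2$-exponential decay $\|P_hf\|_{\pi,2}\le e^{-h\lambda(Q)}\|f\|_{\pi,2}$ for $\pi(f)=0$ together with Cauchy--Schwarz; that inequality is all you need, and with it the truncation step becomes unnecessary.
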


\begin{remark}
Although both $Q$ and $\hat{Q}$ are regular,  $\bar{Q}$ may be not regular, see \cite{chen2000equivalence} for example.
Hence the regularity assumption of $\bar{Q}$ can not be removed.
\end{remark}

By H{\"o}lder's inequality, we have the following result directly.

\begin{corollary}\label{absolutely-continuous}
Assume that the assumptions of Theorem \ref{main result} hold.
If the initial measure $\nu$ is absolutely continuous with respect to the invariant distribution $\pi$,
then for any bounded function $g:E\to[a,b]$, and $t, \varepsilon >0$, we have
\[
		\mathbb{P}_{\nu}\left(\frac{1}{t} \int_{0}^{t} g\left(X_{s}\right)\mathrm{d}s-\pi (g) \geq \varepsilon \right) \leq \left\|\frac{\mathrm{d}\nu}{\mathrm{d}\pi}\right\|_{\pi,p}\exp\left\{-\frac{\lambda(Q)t\varepsilon^2}{q(b-a)^2} \right\},
\]
where
\[\left\|\frac{\mathrm{d} \nu}{\mathrm{d} \pi}\right\|_{\pi, p}:= \begin{cases}{\left[\pi\left(\left|\frac{\mathrm{d} \nu}{\mathrm{d}  \pi}\right|^p\right)\right]^{1 / p}}, & \text { if } 1<p<\infty, \\ \operatorname{ess} \sup \left|\frac{\mathrm{d} \nu}{\mathrm{d} \pi}\right|, & \text { if } p=\infty,\end{cases}\]
and
\[\frac{1}{p}+\frac{1}{q}=1.\]
\end{corollary}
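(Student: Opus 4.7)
The plan is to reduce the statement to the stationary initial distribution case already handled by Theorem \ref{main result} via a straightforward change-of-measure argument based on H\"older's inequality. Let $A_t := \{\frac{1}{t}\int_0^t g(X_s)\mathrm{d}s - \pi(g) \geq \varepsilon\}$ denote the deviation event and let $f(x) := \mathbb{P}_x(A_t)$ be the map sending an initial state to the probability of this event. Disintegrating $\mathbb{P}_\nu$ over the initial state and using $\nu \ll \pi$, I would write
\begin{equation*}
\mathbb{P}_\nu(A_t) \;=\; \int_E f(x)\,\mathrm{d}\nu(x) \;=\; \int_E f(x)\,\frac{\mathrm{d}\nu}{\mathrm{d}\pi}(x)\,\mathrm{d}\pi(x) \;=\; \left\langle f,\,\frac{\mathrm{d}\nu}{\mathrm{d}\pi}\right\rangle_{\!\pi}.
\end{equation*}

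Next I would invoke H\"older's inequality with conjugate exponents $p$ and $q$ ($1/p+1/q=1$), which gives
\begin{equation*}
\mathbb{P}_\nu(A_t) \;\leq\; \left\|\frac{\mathrm{d}\nu}{\mathrm{d}\pi}\right\|_{\pi,p}\!\cdot\,\|f\|_{\pi,q}.
\end{equation*}
Since $0 \leq f(x)\leq 1$ for every $x\in E$ and $q\geq 1$, one has the pointwise bound $f(x)^q \leq f(x)$, so
\begin{equation*}
\|f\|_{\pi,q}^{\,q} \;=\; \int_E f(x)^q\,\mathrm{d}\pi(x) \;\leq\; \int_E f(x)\,\mathrm{d}\pi(x) \;=\; \mathbb{P}_\pi(A_t).
\end{equation*}
Applying Theorem \ref{main result} to $\mathbb{P}_\pi(A_t)$ and taking the $1/q$ power yields the factor $\exp\{-\lambda(Q)t\varepsilon^2/(q(b-a)^2)\}$; multiplication by $\|\mathrm{d}\nu/\mathrm{d}\pi\|_{\pi,p}$ gives the claimed inequality.

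No step here is really an obstacle once Theorem \ref{main result} is in hand; the only care needed is the boundary case $p=\infty$, $q=1$, which I would handle directly by noting that $\|f\|_{\pi,1}=\mathbb{P}_\pi(A_t)$ while $\|\mathrm{d}\nu/\mathrm{d}\pi\|_{\pi,\infty}$ serves as an essentially pointwise bound, so the same derivation closes. The whole argument is insensitive to whether the chain is reversible, since only the stationary bound from Theorem \ref{main result} and the elementary inequality $f^q \leq f$ are used.
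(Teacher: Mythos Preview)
Your argument is correct and is precisely the approach the paper indicates: it states only that the corollary follows ``by H{\"o}lder's inequality'' from Theorem~\ref{main result}, without spelling out the details. Your change-of-measure computation together with the elementary bound $f^q\le f$ for $0\le f\le 1$ is exactly the intended one-line derivation.
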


\begin{remark}\label{remark-1}
L{\'e}zaud \cite{1998Chernoff} was the first to obtain a chernoff-type bound for a finite state space CTMC via the $\mathcal{L}^{2}(\pi)$-spectral gap $\lambda(Q)$.
If the function $g$ satisfies $\pi(g)=0$, $\|g\|_{\infty}=\|g\|_{\pi,2}^2=1$, then Theorem 3.4 in \cite{1998Chernoff} gives
\begin{equation*}\label{lezaud}
		\mathbb{P}_{\pi}\left(\frac{1}{t} \int_{0}^{t} g\left(X_{s}\right)\mathrm{d}s \geq \varepsilon \right) \leq \exp\left\{-\frac{\lambda(Q)t\varepsilon^2}{12} \right\}.
\end{equation*}
Under the same assumptions, our result derives a better bound as follows
\begin{equation*}
		\mathbb{P}_{\pi}\left(\frac{1}{t} \int_{0}^{t} g\left(X_{s}\right)\mathrm{d}s \geq \varepsilon \right) \leq \exp\left\{-\frac{\lambda(Q)t\varepsilon^2}{4} \right\}.
\end{equation*}
\end{remark}

Note that under the same assumptions as Theorem \ref{main result} or Corollary \ref{absolutely-continuous}, Hoeffding's inequalities are also established by replacing $\lambda(Q)$ by one of its lower bounds.
In the  reversible case, if there exists a constant $\beta>0$ and a fixed state $j\in{E}$ such that
\begin{equation}\label{kappa}
\mathbb{E}_{i}\left[e^{\beta\tau_j}\right]<\infty, \ \ \forall{i}\in{E},
\end{equation}
where $\tau_j:=\inf\{t\geq0: X_t=j\}$ is the first hitting time to the state $j$, then
$$\lambda(Q)\geq{\beta}.$$
Furthermore, (\ref{kappa}) holds if and only if there exists a function $V\geq1$ such that
\[QV(i)\leq-\beta{V}(i),\ \ i\neq{j},\]
see Theorem 4.1 in \cite{chen2000equivalence}.

In addition,  the $\mathcal{L}^{2}(\pi)$-spectral gap  $\lambda(Q)$ can be characterized by its Dirichlet form $\left(D,\mathcal{D}(D)\right)$,
which will be used in the proof of Theorem \ref{main result},
where $D$ is an operator on $\mathcal{L}^{2}(\pi)$, defined by
\[
	D(f):=\lim_{t\to 0}\frac{\langle f-P_tf,f \rangle_\pi}{t},\ \ \forall f\in \mathcal{L}^{2}(\pi),
\]
and $\mathcal{D}(D)$ is the domain of $D$, given by
\[
\mathcal{D}(D)=\left\{f\in \mathcal{L}^{2}(\pi):D(f)<\infty\right\}.
\]
The spectral gap of $D$ is defined by
\[
		\lambda(D):=\inf \left\{D(f): f\in \mathcal{D}(D),\ \|f\|_{\pi,2}=1,\ \pi(f)=0 \right\}.
\]
From Theorem 9.1 in \cite{Chen2004}, we know that
\begin{equation}\label{D-Q}
\lambda(Q)=\lambda(D).
\end{equation}

There is another Dirichlet form $\left(D^*,\mathcal{D}(D^*)\right)$, where $D^{*}$ is an operator  on $\mathcal{L}^{2}(\pi)$, defined by
\[D^{*}(f):=\frac{1}{2}\sum_{i,j\in E}\pi(i){Q}(i,j)\left(f(j)-f(i)\right)^2,\ \ \forall f\in \mathcal{L}^{2}(\pi),\]
and  $\mathcal{D}(D^{*})$ is the domain of $D^{*}$, given by
\[\mathcal{D}(D^{*})=\left\{f \in \mathcal{L}^{2}(\pi):D^{*}(f)<\infty\right\}.\]
If $Q$-matrix $\bar{Q}$ is regular, it follows from \cite{chen2000equivalence} that $D=D^{*}$ and $\mathcal{D}(D)=\mathcal{D}(D^{*})$, i.e.,
\begin{equation}\label{dense}
\mathcal{K} \text { is dense in } \mathcal{D}\left(D^*\right) \text { in the }\|\cdot\|_{D^*-\operatorname{norm}}\left(\|f\|_{D^*}^2:=\|f\|^2+D^*(f)\right),
\end{equation}
where $\mathcal{K}$ is the set of all functions on $E$ with finite support.

\begin{example}[Irreversible Case]
Consider a CTMC on a finite state space $E=\left\{0,1,2\right\}$ with the $Q$-matrix as follows
\[
		Q=\left(\begin{array}{ccc}
			-2&1&1\\
			1&-3&2\\
			1&0&-1\\
		\end{array} \right).
\]
\end{example}
By calculations, we have
\[\pi(0)=\frac{1}{3},\ \ \pi(1)=\frac{1}{9},\ \ \pi(2)=\frac{5}{9}.\]
According to (\ref{symmetrizing procedure-1}) and (\ref{symmetrizing procedure-2}), we can obtain
\[
		\hat{Q}=\left(\begin{array}{ccc}
			-2&1/3&5/3\\
			3&-3&0\\
			3/5&2/5&-1\\
		\end{array} \right),\ \
		\bar{Q}=\left(\begin{array}{ccc}
			-2&2/3&4/3\\
			2&-3&1\\
			4/5&1/5&-1\\
		\end{array} \right).
\]
It is easy to calculate that
\[\bar{\lambda}_1=\frac{15-\sqrt{15}}{5},\ \ \bar{\lambda}_2=\frac{15+\sqrt{15}}{5}.\]
Then, combining with (\ref{eigenvalue}) and (\ref{bar{q}}), we have
$$\lambda(Q)=\frac{{15-\sqrt{15}}}{5}.$$

\begin{example}[Birth and Death Processes]
Consider a birth and death process with death rates $a_i$, $1\leq i\leq N$, and birth rates $b_i$, $0\leq i\leq N-1$, $N\leq\infty$.
\end{example}
In the case of $a_i=\alpha$ and $b_i=\beta$,
it follows from \cite{granovsky1997decay} that
\[\lambda(Q)=\alpha+\beta-{2\sqrt{\alpha\beta}}\cos(\frac{\pi}{N+1}).\]
In particular, as $N\rightarrow\infty$ and $\alpha>\beta$, we have
\[\lambda(Q)=\alpha+\beta-\lim_{N\rightarrow\infty}{2\sqrt{\alpha\beta}}\cos(\frac{\pi}{N+1})=(\sqrt{\alpha}-\sqrt{\beta})^2.\]

For general birth and death processes, we can give an estimate of the lower bound of the spectral gap.
Define
		$$
		\mu_1=1,\ \ \mu_k=\frac{b_1\cdots b_{k-1}}{a_2\cdots a_k},\ \ 2\leq k\leq N\leq \infty.
		$$
If $\sum_{i=0}^{N}\mu_i<\infty$ and $\delta:=\sup_{n\in E}\sum_{j=0}^{n}\mu_j\sum_{k=n}^{N}\frac{1}{\mu_kb_k}<\infty$, then it follows from Corollary 6.4 in \cite{chen2010speed} that
\[
	\lambda(Q)\geq \frac{1}{4\delta}.
\]

\section{Proof of Theorem 2.2}\label{sec 3}

To prove Theorem \ref{main result}, we initially introduce Hoeffding's inequality for DTMCs based on the $\mathcal{L}^{2}(\pi)$-spectral gap.
Let $\left\{X_{k}, k\geq 0\right\}$ be an irreducible and positive recurrent DTMC on the countable state space $E$.
Let $P=(P(i,j))_{i,j\in E}$ and $\pi$ be the one-step transition matrix and the invariant probability distribution of the chain $X_k$.
It is known that each transition matrix $P$ of DTMC can be viewed as a linear operator acting on $\mathcal{L}^{2}(\pi)$, which acts to the right on functions, i.e.,
\[Pf(i)=\sum_{j\in E}P(i,j)f(j), \ \ \forall i \in E, \forall f \in \mathcal{L}^{2}(\pi).\]
We now introduce the $\mathcal{L}^{2}(\pi)$-spectral gap of $P$, denoted by $1-\lambda(P)$.

\begin{definition}\label{spectral-gap-DTMC}
The chain  $X_{k}$ is said to admit an  $\mathcal{L}^{2}(\pi)$-spectral gap $1-\lambda(P)$ if
\begin{equation}\label{right-spectral-gap}
		\lambda(P):=\sup \left\{\langle{P}h,h\rangle_{\pi}:\|h\|_{\pi,2}=1,\ \pi(h)=0\right\}<1.
\end{equation}
\end{definition}

Note that when the state space $E$ is finite and the transition matrix $P$ is reversible, i.e., $P$ satisfies
\[\pi(i)P(i,j)=\pi(j)P(j,i),  \ \ \forall i,j\in E,\]
then $\lambda(P)$ coincides with the second largest eigenvalue of $P$ (the largest eigenvalue of $P$ is identically equal to 1).
Similar to CTMCs, it is not difficult to verify that
\begin{equation}\label{bar{p}}
\lambda(P)=\lambda(\bar{P}),
\end{equation}
where
\begin{equation*}
\bar{P}(i,j)=\frac{P(i,j)+\hat{P}(i,j)}{2}\ \  \text{and}\ \ \hat{P}(i,j)=\frac{\pi(j)P(j,i)}{\pi(i)}, \ \ \forall i,j\in E.
\end{equation*}

According to Theorem 3.3 in \cite{fan2021hoeffding} and (\ref{bar{p}}),
we introduce the following Hoeffding inequality regarding DTMCs with the $\mathcal{L}^{2}(\pi)$-spectral gap $1-\lambda(P)$.

\begin{theorem}\label{main result dtmc}
Assume that the chain $X_k$ admits  an  $\mathcal{L}^{2}(\pi)$-spectral gap $1-\lambda(P)$.
Then for any bounded function $g:E\to[a,b]$, and $n, \varepsilon >0$, we have
\[
		\mathbb{P}_{\pi}\left(\frac{1}{n} \sum_{k=0}^{n-1} g\left(X_{k}\right)-\pi (g) \geq \varepsilon \right) \leq \exp\left\{-\frac{1-\max\{\lambda(P),0\}}{1+\max\{\lambda(P),0\}}\cdot\frac{2n\varepsilon^2}{(b-a)^2} \right\}.
\]
\end{theorem}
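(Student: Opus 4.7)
The statement is Theorem 3.3 of \cite{fan2021hoeffding} restated with $\lambda(P)$ in place of $\lambda(\bar P)$, which is permitted by the identity (\ref{bar{p}}). My plan is therefore to retrace the operator-theoretic proof of \cite{a2004optimal,fan2021hoeffding} and indicate where each ingredient enters. The starting point is the exponential Markov inequality: for any $\theta>0$,
\[
\mathbb{P}_\pi\!\left(\tfrac1n\sum_{k=0}^{n-1} g(X_k)-\pi(g)\geq \varepsilon\right) \leq e^{-n\theta(\pi(g)+\varepsilon)}\,\mathbb{E}_\pi\!\left[\exp\!\left(\theta\sum_{k=0}^{n-1}g(X_k)\right)\right],
\]
which reduces the problem to bounding the moment generating function on the right.

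Introduce the multiplication-by-$e^{\theta g}$ operator $M_\theta$ on $\mathcal{L}^{2}(\pi)$ and the symmetrized Feynman--Kac operator $U_\theta:=M_\theta^{1/2}PM_\theta^{1/2}$. Unfolding by the Markov property and writing $M_\theta=M_\theta^{1/2}M_\theta^{1/2}$ gives
\[
\mathbb{E}_\pi\!\left[\exp\!\left(\theta\sum_{k=0}^{n-1}g(X_k)\right)\right]=\langle M_\theta^{1/2}\mathbf{1},\,U_\theta^{n-1}M_\theta^{1/2}\mathbf{1}\rangle_\pi \leq \|M_\theta^{1/2}\mathbf{1}\|_\pi^{2}\,\|U_\theta\|_\pi^{n-1},
\]
so everything comes down to controlling $\|U_\theta\|_\pi$. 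The elementary identity $\langle Pf,f\rangle_\pi=\langle\bar Pf,f\rangle_\pi$ translates into the quadratic-form identity $\langle U_\theta f,f\rangle_\pi=\langle\bar P(M_\theta^{1/2}f),M_\theta^{1/2}f\rangle_\pi$, shifting the analysis onto the reversible operator $\bar P$, whose spectral gap equals $1-\lambda(P)$ by (\ref{bar{p}}). The León--Perron convexity-and-two-state-reduction argument for reversible chains then yields
\[
\log\|U_\theta\|_\pi \leq \theta\pi(g)+\frac{\theta^{2}(b-a)^{2}}{8}\cdot\frac{1+\max\{\lambda(P),0\}}{1-\max\{\lambda(P),0\}}.
\]
Inserting this back into the Chernoff bound and optimizing in $\theta$ by the choice $\theta=\tfrac{4\varepsilon}{(b-a)^{2}}\cdot\tfrac{1-\max\{\lambda(P),0\}}{1+\max\{\lambda(P),0\}}$ delivers the stated Hoeffding exponent; the prefactor $\|M_\theta^{1/2}\mathbf{1}\|_\pi^{2}=\mathbb{E}_\pi[e^{\theta g}]$, which by Hoeffding's lemma is bounded by $e^{\theta^{2}(b-a)^{2}/8}$, is absorbed into the $(n-1)$-th power above using $1+(n-1)\tfrac{1+\lambda}{1-\lambda}\leq n\tfrac{1+\lambda}{1-\lambda}$ valid whenever $\lambda(P)\geq 0$.

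The main obstacle is the spectral bound on $U_\theta$. Because $P$ is in general not reversible, $U_\theta$ is not self-adjoint, so its operator norm is not captured by its numerical range and the spectral theorem does not apply directly; knowing $\lambda(P)$ alone does not immediately pin down $\|U_\theta\|_\pi$. The resolution is precisely the León--Perron two-state reduction: using convexity of $\theta\mapsto\log\|U_\theta\|_\pi$ together with the symmetrization identity (\ref{bar{p}}), one shows that the supremum of $\|U_\theta\|_\pi$ over all chains with fixed spectral gap and all $g\in[a,b]$ is attained by an explicit two-state reversible chain on which the inequality is checked by a short direct computation. Executing this reduction carefully --- ensuring admissibility of the extremizer, sharpness of the constants, and that the passage from the reversible case back to the general case via (\ref{bar{p}}) does not lose a factor --- is where the real technical effort of the proof concentrates.
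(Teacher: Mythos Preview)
Your first sentence is exactly right and is, in fact, all that the paper itself does: Theorem~\ref{main result dtmc} is not proved in the paper but simply quoted from \cite{fan2021hoeffding} (Theorem~3.3 there) together with the observation~(\ref{bar{p}}) that $\lambda(P)=\lambda(\bar P)$. The remainder of your proposal --- the Chernoff bound, the Feynman--Kac operator $U_\theta$, the Le\'on--Perron two-state reduction --- is a faithful sketch of the argument in \cite{a2004optimal,fan2021hoeffding}, but it goes well beyond what the present paper supplies or requires; here the theorem functions purely as an imported black box feeding into the skeleton-chain limit in Theorem~\ref{theroem-1}.
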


Next, we will derive the proof of Theorem \ref{main result} into three steps, which are presented in Subsections 3.1-3.3.
In Subsection \ref{ssub-1}, we first introduce a Hoeffding inequality for finite state CTMCs, employing Theorem \ref{main result dtmc} and the skeleton chain technique.
In Subsection \ref{ssub-2}, we introduce the collapsed chain and present some basic properties.
Finally in  Subsection \ref{ssub-3},  we extend the result established for finite state CTMCs to infinitely state by employing the results established in  Subsections \ref{ssub-1} and \ref{ssub-2}.

\subsection{Finite state CTMCs}\label{ssub-1}

In this subsection, we present a Hoeffding inequality for a finite state CTMC.
To show this, we introduce the definition of the skeleton chain according to Chapter 5 of \cite{Anderson1991}.
Given a number $\delta>0$, the DTMC $\left\{X_{n\delta}, n\geq 1\right\}$ having the one-step transition matrix $P^\delta=(P^{\delta}(i,j))_{i,j\in{E}}$
(and therefore the $n$-step  transition matrix $P^{n\delta}$) is called the $\delta$-skeleton chain of the CTMC $\left\{X_{t}, t\geq 0\right\}$.
It is not difficult to verify that $X_{n\delta}$ and $X_t$ have the same invariant probability distribution $\pi$.
In addition, if $X_t$ is reversible, then $X_{n\delta}$ is also reversible.
Furthermore, if $X_t$ has a finite state space, the transition matrix $P^\delta$ is given by
\begin{equation}\label{skeleton-chain}
P^\delta=\exp\left\{{\delta}Q\right\}=\sum_{n=0}^{\infty}\frac{\delta^n{Q^n}}{n!}.
\end{equation}

\begin{theorem}\label{theroem-1}
Assume that the chain $X_{t}$ has a finite state space $E$.
Then for any bounded function $g:E\to [a,b]$, and $t, \varepsilon >0$, we have
\[
\mathbb{P}_{\pi}\left(\frac{1}{t} \int_{0}^{t} g\left(X_{s}\right)\mathrm{d}s-\pi(g) \geq \varepsilon \right) \leq\exp\left\{-\frac{\lambda(Q)t\varepsilon^2}{(b-a)^2}\right\}.
\]
\end{theorem}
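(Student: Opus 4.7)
My strategy is to reduce the continuous-time bound to the discrete-time Hoeffding inequality via the $\delta$-skeleton chain, and then pass to the limit $\delta\to 0^{+}$. For each $m\in\mathbb{N}$, set $\delta_m:=t/m$, so that $\{X_{k\delta_m}\}_{k\geq 0}$ is a DTMC with invariant distribution $\pi$ and one-step transition matrix $P^{\delta_m}=\exp\{\delta_m Q\}$ by (\ref{skeleton-chain}). For any $\eta\in(0,\varepsilon)$, applying Theorem \ref{main result dtmc} to the skeleton chain over $m$ steps yields
\[
\mathbb{P}_\pi\!\left(\frac{1}{m}\sum_{k=0}^{m-1}g(X_{k\delta_m})-\pi(g)\geq \varepsilon-\eta\right)\leq \exp\!\left\{-\frac{1-\max\{\lambda(P^{\delta_m}),0\}}{1+\max\{\lambda(P^{\delta_m}),0\}}\cdot\frac{2m(\varepsilon-\eta)^{2}}{(b-a)^{2}}\right\}.
\]

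\textbf{Controlling the skeleton spectral gap.} The crucial analytic step is to establish $\lambda(P^{\delta_m})\leq e^{-\delta_m\lambda(Q)}$. For any $h\in\mathcal{L}^{2}(\pi)$ with $\pi(h)=0$, invariance of $\pi$ under the semigroup gives $\pi(P_{s}h)=0$ for all $s\geq 0$, so Definition \ref{spectral-gap} applied to $P_{s}h$ yields $\langle \mathcal{A}P_{s}h,P_{s}h\rangle_{\pi}\leq -\lambda(Q)\|P_{s}h\|_{\pi,2}^{2}$. Hence
\[
\frac{d}{ds}\|P_{s}h\|_{\pi,2}^{2}=2\langle \mathcal{A}P_{s}h,P_{s}h\rangle_{\pi}\leq -2\lambda(Q)\|P_{s}h\|_{\pi,2}^{2},
\]
and Gr\"onwall's inequality gives $\|P_{\delta_m}h\|_{\pi,2}\leq e^{-\delta_m\lambda(Q)}\|h\|_{\pi,2}$. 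Cauchy--Schwarz then yields $\langle P^{\delta_m}h,h\rangle_{\pi}\leq e^{-\delta_m\lambda(Q)}\|h\|_{\pi,2}^{2}$, and Definition \ref{spectral-gap-DTMC} delivers the claim. Consequently
\[
\frac{1-\lambda(P^{\delta_m})}{1+\lambda(P^{\delta_m})}\ \geq\ \frac{1-e^{-\delta_m\lambda(Q)}}{1+e^{-\delta_m\lambda(Q)}}=\tanh\!\left(\frac{\delta_m\lambda(Q)}{2}\right),
\]
and substituting $m=t/\delta_m$ shows that the exponent in the DTMC bound converges to $-\lambda(Q)t(\varepsilon-\eta)^{2}/(b-a)^{2}$ as $m\to\infty$.

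\textbf{Passage to the limit and main obstacle.} Because $E$ is finite, $s\mapsto g(X_{s})$ is c\`adl\`ag with only finitely many jumps on $[0,t]$ $\mathbb{P}_\pi$-almost surely, so the Riemann sums satisfy $\frac{1}{m}\sum_{k=0}^{m-1}g(X_{k\delta_m})\to \frac{1}{t}\int_{0}^{t}g(X_{s})\,ds$ almost surely as $m\to\infty$. Therefore the event $\{\frac{1}{t}\int_{0}^{t}g(X_{s})\,ds-\pi(g)\geq\varepsilon\}$ is almost surely contained in $\liminf_{m\to\infty}\{\frac{1}{m}\sum_{k=0}^{m-1}g(X_{k\delta_m})-\pi(g)\geq\varepsilon-\eta\}$, and Fatou's lemma combined with the first paragraph gives
\[
\mathbb{P}_\pi\!\left(\frac{1}{t}\int_{0}^{t}g(X_{s})\,ds-\pi(g)\geq\varepsilon\right)\leq \exp\!\left\{-\frac{\lambda(Q)t(\varepsilon-\eta)^{2}}{(b-a)^{2}}\right\},
\]
after which sending $\eta\to 0^{+}$ concludes. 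The principal obstacle is the sharp semigroup bound $\lambda(P^{\delta_m})\leq e^{-\delta_m\lambda(Q)}$: in the irreversible case $P^{\delta_m}$ cannot be diagonalized directly, so the Dirichlet-form differentiation above, which implicitly uses $\langle\mathcal{A}f,f\rangle_{\pi}=\langle\bar{\mathcal{A}}f,f\rangle_{\pi}$ and hence $\lambda(Q)=\lambda(\bar{Q})$, is what lets the skeleton method deliver the optimal constant.
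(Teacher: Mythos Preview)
Your proof is correct and shares the paper's overall skeleton-chain strategy: apply Theorem~\ref{main result dtmc} to the $t/m$-skeleton, relate its discrete spectral gap to $\lambda(Q)$, and pass to the limit via Riemann sums and Fatou's lemma. The one substantive difference is in how the skeleton spectral gap is controlled. The paper uses the finite-state matrix expansion $P^{\delta_m}=I+\delta_m Q+o(\delta_m)$ to obtain the \emph{asymptotic} identity $\lambda(P^{\delta_m})=1-\delta_m\lambda(Q)+o(\delta_m)$, whereas you derive the \emph{non-asymptotic} inequality $\lambda(P^{\delta_m})\leq e^{-\delta_m\lambda(Q)}$ from the semigroup decay $\tfrac{d}{ds}\|P_sh\|_{\pi,2}^{2}\leq -2\lambda(Q)\|P_sh\|_{\pi,2}^{2}$ and Gr\"onwall. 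Both routes produce the same limiting exponent $-\lambda(Q)t\varepsilon^{2}/(b-a)^{2}$; your inequality is slightly sharper for each fixed $m$ and, because it never invokes the power series for $e^{\delta Q}$, carries over verbatim to the bounded-generator setting of Corollary~\ref{corollary-1}. Your insertion of the slack parameter $\eta$ before the Fatou step is also a little cleaner than the paper's direct passage with the closed event $\{\cdot\geq\varepsilon\}$.
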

\begin{proof}
For any $\omega\in\Omega$, $X_t(\omega)$ is a right-continuous with left limits sample function.
Furthermore, since $g$ is bounded,
the integral $\frac{1}{t} \int_{0}^{t} g\left(X_{s}\right)\mathrm{d}s$ can be expressed as the limit of Riemann sums, such as
\[
		\frac{1}{t} \int_{0}^{t} g\left(X_{s}(\omega)\right)\mathrm{d}s=\lim _{k \rightarrow \infty} \frac{1}{k} \sum_{i=0}^{k-1} g\left(X_{i t / k}(\omega)\right),
\]
from which, we know that
\begin{equation}\label{theorem-3.2-1}
\frac{1}{k} \sum_{i=0}^{k-1} g\left(X_{i t / k}\right)\xrightarrow{a.s.}\frac{1}{t} \int_{0}^{t} g\left(X_{s}\right)\mathrm{d}s,
\end{equation}
where $X_{it/k}$ is the $\frac{t}{k}$-skeleton chain of $X_t$.
It follows from (\ref{skeleton-chain}) that
\begin{equation}\label{theorem-3.2-1-1}
P^{\frac{t}{k}}=\sum_{n=0}^{\infty}\frac{t^n}{n!\cdot k^n}Q^n=I+\frac{t}{k}Q+o(\frac{1}{k}),
\end{equation}
where $I$ is the identity matrix and $o(\frac{1}{k})$ is the infinitesimal of higher order with respect to $\frac{1}{k}$.

According to (\ref{right-spectral-gap}) and (\ref{theorem-3.2-1-1}), we have
\begin{eqnarray}\label{P-Q-1}
 \nonumber \lambda(P^{\frac{t}{k}}) &=&\sup \left\{\langle P^{\frac{t}{k}}f,f \rangle_{\pi} :\|f\|_{\pi,2}=1,\pi(f)=0 \right\}\\
  \nonumber &=& \sup \left\{ \bigg<  \left(I+\frac{t}{k}{Q}+o(\frac{1}{k}) \right)f,f \bigg>_{\pi} :\|f\|_{\pi,2}=1,\pi(f)=0 \right\}\\
  \nonumber &=& \sup \left\{1+\Big< \frac{t}{k}{Q}f,f \Big>_{\pi}+o(\frac{1}{k}) :\|f\|_{\pi,2}=1,\pi(f)=0 \right\}\\
\nonumber   &=& 1+\sup \left\{\Big< \frac{t}{k}{Q}f,f \Big>_{\pi}:\|f\|_{\pi,2}=1,\pi(f)=0 \right\}+o(\frac{1}{k})\\
  &=& 1-\frac{t}{k} \inf \left\{-\langle{ Q}f,f \rangle_{\pi}:\|f\|_{\pi,2}=1,\pi(f)=0 \right\}+o(\frac{1}{k}).
\end{eqnarray}
Since the state space $E$ is finite, it is evident from (\ref{eigenvalue}) and (\ref{bar{q}}) that $\lambda(Q)>0$.
In addition, it follows that for any $f\in\mathcal{L}^{2}(\pi)$, we have $f\in\mathcal{D}(\mathcal{A})$.
Thus, when the state space $E$ is finite, the infinitesimal generator $\mathcal{A}$ is equal to the $Q$-matrix $Q$.
According to Definition \ref{spectral-gap}, we can obtain that
\begin{equation}\label{P-Q-2}
\lambda(Q)=\inf \left\{-\langle	{Q} f,f \rangle_{\pi}:  \|f\|_{\pi,2}=1,\ \pi(f)=0 \right\}>0.
\end{equation}
Hence, from (\ref{P-Q-1}) and (\ref{P-Q-2}), we have
\begin{equation}\label{P-Q-3}
\lambda(P^{\frac{t}{k}})= 1-\frac{t}{k}\lambda(Q)+o(\frac{1}{k}).
\end{equation}

Since $\lambda(Q)>0$, then for any fixed $t>0$, there exists a positive constant $K$ such that
$$-1<-\frac{t}{k}\lambda(Q)+o(\frac{1}{k})<0,\ \ \forall k>K,$$
which implies that
\begin{equation}\label{12345}
0<\lambda(P^{\frac{t}{k}})<1,\ \ \forall k>K.
\end{equation}
Furthermore, according to (\ref{P-Q-3}), we have
\begin{equation}\label{54321}
1-\lambda(P^{\frac{t}{k}})=\frac{t}{k}\lambda(Q)+o(\frac{1}{k}).
\end{equation}

Therefore, as $k>K$ and for any $\varepsilon >0$, Theorem \ref{main result dtmc} and (\ref{12345})--(\ref{54321}) give
\begin{eqnarray}\label{3,5}
 \nonumber \mathbb{P}_{\pi}\left(\frac{1}{k}\sum_{i=0}^{k-1} g\left(X_{it/k}\right)-\pi (g) \geq \varepsilon \right) & \leq &  \exp\left\{-2\cdot\frac{1-\lambda(P^{\frac{t}{k}})}{1+\lambda(P^{\frac{t}{k}})}\cdot \frac{k\varepsilon^2}{(b-a)^2} \right\} \\
\nonumber &=&\exp\left\{-2\cdot\frac{\frac{t}{k}{\lambda}(Q)+o(\frac{1}{k})}{2-\frac{t}{k}{\lambda}(Q)+o(\frac{1}{k})}\cdot \frac{k\varepsilon^2}{(b-a)^2} \right\} \\
  &=& \exp\left\{-2\cdot\frac{t{\lambda}(Q)+ k \cdot o(\frac{1}{k})}{2-\frac{t}{k}{\lambda}(Q)+o(\frac{1}{k})}\cdot \frac{\varepsilon^2}{(b-a)^2} \right\}.
\end{eqnarray}
Applying (\ref{theorem-3.2-1}) and Fatou's lemma, and letting  $k\to \infty$ to inequality (\ref{3,5}) gives the following result
\begin{eqnarray*}
  \mathbb{P}_{\pi}\left(\frac{1}{t} \int_{0}^{t} g\left(X_{s}\right)\mathrm{d}s-\pi (g) \geq \varepsilon \right) &=& \mathbb{P}_{\pi}\left(\lim_{k\to \infty}\frac{1}{k}\sum_{i=0}^{k-1} g\left(X_{it/k}\right)-\pi (g) \geq \varepsilon \right)\\
   &\leq& \mathop{\liminf}\limits_{k\to \infty} \mathbb{P}_{\pi}\left(\frac{1}{k}\sum_{i=0}^{k-1} g\left(X_{it/k}\right)-\pi (g) \geq \varepsilon \right)\\
   &\leq& \mathop{\liminf}\limits_{k\to \infty} \exp\left\{-2\cdot\frac{t{\lambda}(Q)- k\cdot o(\frac{1}{k})}{2-\frac{t}{k}{\lambda}(Q)+o(\frac{1}{k})}\cdot \frac{\varepsilon^2}{(b-a)^2} \right\}\\
   &=&  \exp\left\{\lim_{k\to \infty} -2\cdot\frac{t{\lambda}(Q)- k \cdot o(\frac{1}{k})}{2-\frac{t}{k}{\lambda}(Q)+o(\frac{1}{k})}\cdot \frac{\varepsilon^2}{(b-a)^2} \right\}\\
  &=&\exp\left\{-\frac{{\lambda}(Q)t\varepsilon^2}{(b-a)^2} \right\}.
\end{eqnarray*}
Thus, we obtain the assertion.
\end{proof}

Using the same proof of Theorem \ref{theroem-1}, we can obtain the Hoeffding inequality for a infinite state CTMC with a uniformly bounded $Q$-matrix (i.e., $\sup_{i\in E}-Q(i,i)<\infty$).

\begin{corollary}\label{corollary-1}
Assume that the chain $X_t$ admits an $\mathcal{L}^{2}(\pi)$-spectral gap $\lambda(Q)$, and that the $Q$-matrix $Q$ is uniformly bounded.
Then for any bounded function $g:E\to [a,b]$, and $t, \varepsilon >0$, we have
\[
\mathbb{P}_{\pi}\left(\frac{1}{t} \int_{0}^{t} g\left(X_{s}\right)\mathrm{d}s-\pi(g) \geq \varepsilon \right) \leq\exp\left\{-\frac{\lambda(Q)t\varepsilon^2}{(b-a)^2}\right\}.
\]
\end{corollary}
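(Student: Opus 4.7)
The plan is to mimic verbatim the three-stage argument used in the proof of Theorem \ref{theroem-1}: approximate the ergodic average by a Riemann sum $\frac{1}{k}\sum_{i=0}^{k-1} g(X_{it/k})$ over the $(t/k)$-skeleton chain, invoke the discrete-time bound of Theorem \ref{main result dtmc} for this skeleton, and finally send $k\to\infty$ through Fatou's lemma. The only thing to check is that each step still makes sense when $E$ is countably infinite, using the uniform-bound hypothesis $q:=\sup_{i\in E}\bigl(-Q(i,i)\bigr)<\infty$.

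The key consequence of uniform boundedness is the representation $Q=q(P^*-I)$, where $P^*(i,j)=Q(i,j)/q$ for $i\neq j$ and $P^*(i,i)=1+Q(i,i)/q$ is a stochastic matrix having $\pi$ as an invariant distribution. Since $P^*$ is a contraction on $\mathcal{L}^2(\pi)$, this exhibits $Q$ as a bounded linear operator on $\mathcal{L}^2(\pi)$ with operator norm at most $2q$. Consequently the matrix exponential in (\ref{skeleton-chain}) converges in operator norm and yields the uniform expansion
\[
P^{t/k}=I+\frac{t}{k}Q+R_k,\qquad \sup_{\|f\|_{\pi,2}=1}\|R_k f\|_{\pi,2}=O(1/k^2).
\]
Moreover, boundedness of $Q$ forces $\mathcal{D}(\mathcal{A})=\mathcal{L}^2(\pi)$ and $\mathcal{A}=Q$, so Definition \ref{spectral-gap} becomes
\[
\lambda(Q)=\inf\bigl\{-\langle Qf,f\rangle_\pi:\|f\|_{\pi,2}=1,\ \pi(f)=0\bigr\}.
\]

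Substituting the expansion into the variational formula (\ref{right-spectral-gap}) and exploiting the uniformity of the remainder over the unit sphere, the derivation of (\ref{P-Q-1})--(\ref{P-Q-3}) goes through without modification, giving $\lambda(P^{t/k})=1-(t/k)\lambda(Q)+o(1/k)$, so that $\lambda(P^{t/k})\in(0,1)$ for all sufficiently large $k$. Applying Theorem \ref{main result dtmc} to the skeleton DTMC $\{X_{it/k}\}_{i\ge 0}$, which inherits the invariant distribution $\pi$, produces the analogue of (\ref{3,5}); the almost sure Riemann-sum convergence (\ref{theorem-3.2-1}) together with Fatou's lemma then yields the claimed exponential bound upon letting $k\to\infty$.

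The main obstacle is exactly the step from the pointwise identity $(P^{t/k}f-f)/(t/k)\to Qf$ to a remainder bound uniform over the $\mathcal{L}^2(\pi)$-unit sphere of mean-zero functions; this is precisely what the hypothesis $\sup_i(-Q(i,i))<\infty$ delivers through the decomposition $Q=q(P^*-I)$, and it is what obstructs the same argument for unbounded $Q$-matrices. The general case treated in Theorem \ref{main result} therefore requires the truncation and collapsed-chain machinery of Subsections \ref{ssub-2}--\ref{ssub-3}, which substitute for the missing operator-norm control on $Q$.
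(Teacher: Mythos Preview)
Your proposal is correct and follows the same route as the paper. The paper itself gives no separate proof of this corollary beyond the remark that ``the same proof of Theorem \ref{theroem-1}'' works because the matrix-exponential identity (\ref{skeleton-chain}) remains valid for uniformly bounded $Q$; your uniformization decomposition $Q=q(P^*-I)$ simply makes explicit the operator-norm control that justifies this identity and the uniform $o(1/k)$ remainder in (\ref{P-Q-1})--(\ref{P-Q-3}).
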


However, since equality (\ref{skeleton-chain}) fails to hold for a CTMC with an unbounded $Q$-matrix, the skeleton chain technique is not applicable to more general CTMCs, which urges us to develop the augmented truncation approximation technique in order to extend the results from finite state CTMCs to infinite state CTMCs.

\subsection{The collapsed chain}\label{ssub-2}

We first introduce the technique of the collapsed chain, one special augmented truncation approximation.
Let $\{{ }_{(n)}E,n=1,2,\ldots\}$ be a sequence of subsets in the state space $E$, satisfying ${ }_{(n)}E\subset{ }_{(n+1)}E$ and $\lim_{n\rightarrow\infty}{ }_{(n)}E=E$.
Then, let ${ }_{(n)}E^C$ be the complement of ${ }_{(n)}E$ and consider $_{(n)}e={ }_{(n)}E^C$ as a new single-point state.
For an irreducible and regular $Q$-matrix $Q=(Q(i,j))_{i,j\in{E}}$,
consider the augmented truncation $Q$-matrix ${ }_{(n+1)}\widetilde{Q}=({ }_{(n+1)}\widetilde{Q}(i,j))_{i,j\in{{ }_{(n+1)}\widetilde{E}}}$ on the state space ${ }_{(n+1)}\widetilde{E}={ }_{(n)}E\cup\{{_{(n)}e}\}$,
given by
\begin{equation*}\label{nQ-1}
{ }_{(n+1)}\widetilde{Q}(i,j)=Q(i,j),\ \ i,j\in{{ }_{(n)}E},
\end{equation*}
\begin{equation*}\label{nQ-2}
{ }_{(n+1)}\widetilde{Q}(i,{_{(n)}e})=\sum_{k\in{{ }_{(n)}E^C}}Q(i,k),\ \ i\in{{ }_{(n)}E},
\end{equation*}
\begin{equation*}\label{nQ-3}
{ }_{(n+1)}\widetilde{Q}({_{(n)}e},i)=\frac{\sum_{k\in{{ }_{(n)}E^C}}\pi(k)Q(k,i)}{\sum_{k\in{{ }_{(n)}E^C}}\pi(k)},\ \ i\in{{ }_{(n)}E},
\end{equation*}
and
\begin{equation*}\label{nQ-4}
{ }_{(n+1)}\widetilde{Q}({_{(n)}e},{_{(n)}e})=-\sum_{k\in{{ }_{(n)}E}}{ }_{(n+1)}\widetilde{Q}({_{(n)}e},k).
\end{equation*}

Denote by $\big\{{ }_{(n+1)}\widetilde{X}_{t}, t\geq 0\big\}$ the CTMC with $Q$-matrix ${ }_{(n+1)} \widetilde{Q}$, whose irreducibility is inherent from the irreducibility of the original chain. The chain $_{(n+1)}\widetilde{X}_t$ is called the collapsed chain since it can be  interpreted as ``the original chains with states collapsed to a single state ${_{(n)}e}$''.
This collapsed chain was essentially introduced in Chapter 9 of \cite{Chen2004}  for continuous-time reversible Markov chains.
Here we do not need the reversibility assumption. The discrete-time version of the collapsed chain was introduced in Chapter 2 of \cite{Aldous02}.
Let ${ }_{(n+1)}\widetilde{\pi}(i)=\pi(i), i\in{{ }_{(n)}E}, { }_{(n+1)}\tilde{\pi}({_{(n)}e})=\sum_{k\in{{ }_{(n)}E^C}}\pi(k)$.
It is not difficult to verify that ${ }_{(n+1)}\widetilde{\pi}$ is the unique stationary distribution of the chain ${ }_{(n+1)}\widetilde{X}_{t}$.

Let ${_{(n+1)}\widetilde{g}}$ be an $(n+1)$-dimensional function on ${{ }_{(n+1)}\widetilde{E}}$, such that ${ }_{(n+1)}\widetilde{g}(i)=g(i)$, $i\in{_{(n)}E}$ and ${ }_{(n+1)}\widetilde{g}({_{(n)}e})=0$.
For the chain ${{ }_{(n)}\widetilde{X}_{t}}$ and the function $_{(n)}\widetilde{g}$,  we have the following lemma.

\begin{lemma}\label{3-1}
For any bounded function $g$ on $E$, we have
		$$
		\mathbb{P}_i\left(\omega\in \varOmega:\lim_{n \rightarrow \infty}{_{(n)}\widetilde{g}}\big({ }_{(n)}\widetilde{X}_{s}(\omega) \big)=g\left(X_{s}(\omega)\right) \right)=1,
		$$
and
		$$
		\lim_{n \rightarrow \infty} { }_{(n)}\widetilde{\pi}\left({ }_{(n)}\widetilde{g} \right)=\pi(g).
		$$	
	\end{lemma}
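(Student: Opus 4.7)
The plan is to separate the two assertions. The second one is straightforward: by construction ${}_{(n)}\widetilde{g}({}_{(n-1)}e)=0$ and ${}_{(n)}\widetilde{\pi}$ restricted to ${}_{(n-1)}E$ coincides with $\pi$, so ${}_{(n)}\widetilde{\pi}({}_{(n)}\widetilde{g}) = \sum_{i \in {}_{(n-1)}E} \pi(i) g(i)$. Since $g$ is bounded and the sets ${}_{(n-1)}E$ exhaust $E$, dominated convergence with respect to $\pi$ yields ${}_{(n)}\widetilde{\pi}({}_{(n)}\widetilde{g}) \to \pi(g)$.

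For the first assertion, I would couple $X_t$ and ${}_{(n)}\widetilde{X}_t$ on a common probability space so that they coincide up to the first exit time $\tau_n := \inf\{u \geq 0 : X_u \notin {}_{(n-1)}E\}$. This is possible because ${}_{(n)}\widetilde{Q}(i,j)=Q(i,j)$ for $i,j \in {}_{(n-1)}E$: using the jump-hold representation of $X_t$, the same embedded jump chain and exponential holding times that drive $X_t$ inside ${}_{(n-1)}E$ also drive ${}_{(n)}\widetilde{X}_t$ inside ${}_{(n-1)}E$; at time $\tau_n$ we send ${}_{(n)}\widetilde{X}_t$ to ${}_{(n-1)}e$, and we let it evolve independently thereafter via ${}_{(n)}\widetilde{Q}$ using fresh randomness. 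The strong Markov property then ensures the resulting process has the correct law of the collapsed chain. Regularity of $Q$ guarantees non-explosion of $X_t$, so the number of jumps of $X_t$ on $[0,s]$, and hence the set of states visited $F_s(\omega):=\{X_u(\omega):0\le u\le s\}$, is $\mathbb{P}_i$-almost surely finite. Because ${}_{(n-1)}E \uparrow E$ and $F_s(\omega)$ is finite, there is almost surely some $N(\omega)$ with $F_s(\omega) \subset {}_{(n-1)}E$ for all $n \geq N(\omega)$, forcing $\tau_n(\omega)>s$ and thus ${}_{(n)}\widetilde{g}({}_{(n)}\widetilde{X}_s(\omega))=g(X_s(\omega))$ for all such $n$. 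The pointwise convergence follows.

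The hard part will be justifying that the proposed coupling really does realize the collapsed chain with generator ${}_{(n)}\widetilde{Q}$: one must verify that the spliced process --- $X_t$ up to $\tau_n$, a jump to ${}_{(n-1)}e$, and then an independent ${}_{(n)}\widetilde{Q}$-chain --- is Markov with exactly the transition rates prescribed by ${}_{(n)}\widetilde{Q}$. Because the collapsed state space is finite, its $Q$-matrix is bounded and regular, and the construction of a CTMC from such a generator is classical; nevertheless, the gluing at $\tau_n$ should be written out via a strong-Markov argument, checking in particular that the exit distribution from ${}_{(n-1)}E$ under $X_t$ combined with the restart at ${}_{(n-1)}e$ reproduces the rate formula ${}_{(n)}\widetilde{Q}({}_{(n-1)}e,j) = \sum_{k\in{}_{(n-1)}E^C}\pi(k)Q(k,j) / \sum_{k\in{}_{(n-1)}E^C}\pi(k)$ only in law (which is what the lemma needs). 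Once this bookkeeping is in place, the rest of the argument is the elementary exhaustion step outlined above.
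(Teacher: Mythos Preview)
Your argument is essentially the paper's: non-explosion from regularity gives finitely many states visited on $[0,s]$, hence eventual containment in ${}_{(n-1)}E$, hence ${}_{(n)}\widetilde{X}_s=X_s$ under the natural coupling; and the second assertion is the dominated-convergence computation you wrote. Your ``hard part'' is less hard than you fear: because the post-$\tau_n$ segment is an \emph{independent} collapsed chain started at ${}_{(n-1)}e$, its law is correct by fiat, so the rate ${}_{(n)}\widetilde{Q}({}_{(n-1)}e,\cdot)$ never has to be matched against the exit distribution of $X_t$ --- only the pre-$\tau_n$ rates and the total exit rate $\sum_{k\in{}_{(n-1)}E^C}Q(i,k)={}_{(n)}\widetilde{Q}(i,{}_{(n-1)}e)$ must agree, and they do by construction.
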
	
\begin{proof}
		Since $Q$-matrix $Q$ is regular, we know that the chain $X_t$ is non-explosive, i.e., for any fixes $s>0$,
	$$
		\mathbb{P}_{i}\left(\omega \in \Omega: J(s,\omega)<\infty\right)=1,
		$$
where $J(s)$ denotes the number of jumps of $X_t$ in the time interval $(0, s)$.
Furthermore, for any sample path $\omega$, we define $M_s(\omega)$ as a subset of $E$ that includes all states visited by $X_t(\omega)$ during the time interval $(0, s)$.
Hence, we can find an $N$ such that $M_s(\omega)\subset{{ }_{(n)}E}$ for all $n>N$.
Due to the particular structure of ${ }_{(n)}\widetilde{Q}$, we have ${ }_{(n)}\widetilde{X}_{t}(\omega)=X_{t}(\omega), 0 \leq t \leq s$, $n>N$.
That is, for any fixed $s>0$,
		$$
		\mathbb{P}_{i}\left(\omega \in \Omega: \lim _{n \rightarrow \infty}{ }_{(n)}\widetilde{X}_{s}(\omega)=X_{s}(\omega)\right)=1 .
		$$
		Furthermore, for any bounded function $g$ on $E$, we have
		$$
		\mathbb{P}_{i}\left(\omega \in \Omega: \lim _{n \rightarrow \infty}{_{(n)}\widetilde{g}}\big({}_{(n)}\widetilde{X}_{s}\left(\omega\right)\big)=g\left(X_{s}\left(\omega\right)\right)\right)=1.
		$$
		\noindent
		
On the other hand, by applying the dominated convergence theorem, we can conclude that
		\begin{equation*}
			\begin{split}
				\lim_{n \rightarrow \infty} { }_{(n)}\widetilde{\pi}\left({ }_{(n)}\widetilde{g} \right)&=\lim_{n \rightarrow \infty}\sum_{i\in{{ }_{(n-1)}E}}\pi(i) g(i)\\
				&=\lim_{n \rightarrow \infty}\sum_{i\in{E}}\pi(i){g}(i)\mathbf{1}_{\{i\in {{{ }_{(n-1)}\widetilde{E}}}\}} \\
				&=\pi(g),
			\end{split}
		\end{equation*}
where $\mathbf{1}_{\{\cdot\}}$ is the indicator function. Thus, we complete the proof of this lemma.
\end{proof}

\begin{lemma}\label{3-2}
Assume that the original chain $X_t$ admits an $\mathcal{L}^{2}(\pi)$-spectral gap $\lambda(Q)$, and that the $Q$-matrix $\bar{Q}$ is regular.
Then we have
\[
	\lim_{n\rightarrow\infty}	{\lambda}({ }_{(n)}\widetilde{Q}) = {\lambda}(Q).
\]
\end{lemma}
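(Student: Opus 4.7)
The plan is to pass to the Dirichlet form. Combining (\ref{bar{q}}), (\ref{D-Q}) and the identification $D=D^{*}$ with $\mathcal{D}(D)=\mathcal{D}(D^{*})$ guaranteed by the regularity of $\bar Q$, the spectral gap admits the variational expression
\[
\lambda(Q)=\inf\bigl\{D^{*}(f):f\in\mathcal{D}(D^{*}),\ \|f\|_{\pi,2}=1,\ \pi(f)=0\bigr\},
\]
and an analogous formula holds automatically for $\lambda({ }_{(n)}\widetilde Q)$ on the finite collapsed space ${ }_{(n)}\widetilde E$ with its Dirichlet form ${ }_{(n)}\widetilde D^{*}$. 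I would then sandwich the limit by proving the two matching bounds $\lambda(Q)\le\lambda({ }_{(n)}\widetilde Q)$ for every $n$, and $\limsup_{n\to\infty}\lambda({ }_{(n)}\widetilde Q)\le\lambda(Q)$.

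For the lower bound, I would lift every test function $\widetilde f$ on ${ }_{(n)}\widetilde E$ back to a function $f$ on $E$ by setting $f(i)=\widetilde f(i)$ for $i\in{ }_{(n-1)}E$ and $f(i)=\widetilde f({ }_{(n-1)}e)$ for $i\in{ }_{(n-1)}E^{C}$. Because $f$ is constant on ${ }_{(n-1)}E^{C}$ (so edges internal to that set contribute zero to $D^{*}(f)$) and the cross-boundary rates in ${ }_{(n)}\widetilde Q$ are by construction $\pi$-weighted averages of the original rates, an edge-by-edge count yields the three identities $\pi(f)={ }_{(n)}\widetilde\pi(\widetilde f)$, $\|f\|_{\pi,2}=\|\widetilde f\|_{{ }_{(n)}\widetilde\pi,2}$, and $D^{*}(f)={ }_{(n)}\widetilde D^{*}(\widetilde f)$. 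Hence every admissible $\widetilde f$ for $\lambda({ }_{(n)}\widetilde Q)$ lifts to an admissible $f$ for $\lambda(Q)$ with the same Rayleigh quotient, which forces $\lambda(Q)\le\lambda({ }_{(n)}\widetilde Q)$.

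For the upper bound I would fix $\varepsilon>0$ and pick an admissible $f$ with $D^{*}(f)<\lambda(Q)+\varepsilon$. By the density (\ref{dense}), choose a finitely supported $f_{0}\in\mathcal K$ arbitrarily close to $f$ in the $\|\cdot\|_{D^{*}}$-norm, and for $n$ large enough that $\mathrm{supp}(f_{0})\subset{ }_{(n-1)}E$ set
\[
\widetilde h(i)=f_{0}(i)-\pi(f_{0})\ \ (i\in{ }_{(n-1)}E),\qquad \widetilde h({ }_{(n-1)}e)=-\pi(f_{0}).
\]
The shift by the scalar $\pi(f_{0})$ is chosen so that ${ }_{(n)}\widetilde\pi(\widetilde h)=0$ while leaving every jump difference $\widetilde h(j)-\widetilde h(i)$ equal to the corresponding $f_{0}(j)-f_{0}(i)$, and equal to $f_{0}(i)$ at the boundary. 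The same edge-by-edge identity used above then delivers ${ }_{(n)}\widetilde D^{*}(\widetilde h)=D^{*}(f_{0})$ and $\|\widetilde h\|_{{ }_{(n)}\widetilde\pi,2}^{2}=\|f_{0}\|_{\pi,2}^{2}-\pi(f_{0})^{2}$. The Rayleigh quotient then gives
\[
\lambda({ }_{(n)}\widetilde Q)\le\frac{D^{*}(f_{0})}{\|f_{0}\|_{\pi,2}^{2}-\pi(f_{0})^{2}},
\]
and letting $f_{0}\to f$ in $\|\cdot\|_{D^{*}}$ pushes the right-hand side below $\lambda(Q)+2\varepsilon$; sending $\varepsilon\to0$ completes the proof.

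The main obstacle I expect is precisely the simultaneous preservation of the two variational constraints $\pi(f)=0$ and $\|f\|_{\pi,2}=1$ when moving between $E$ and ${ }_{(n)}\widetilde E$: a naive restriction of $f$ to ${ }_{(n-1)}E$ generally fails to be ${ }_{(n)}\widetilde\pi$-mean zero, and re-centering a function of infinite support can inflate or destroy the Dirichlet form. Passing first to a finitely supported approximant $f_{0}$ via (\ref{dense}) and then subtracting the single scalar $\pi(f_{0})$ leaves every jump increment untouched, hence keeps the Dirichlet form exactly equal to $D^{*}(f_{0})$; this is the step that makes essential use of the regularity of $\bar Q$ assumed in the statement.
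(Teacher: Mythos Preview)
Your proposal is correct and follows essentially the same route as the paper: both pass to the Dirichlet form $D^{*}$ via the regularity of $\bar Q$, identify functions on ${}_{(n)}\widetilde E$ with functions on $E$ that are constant on ${}_{(n-1)}E^{C}$, verify by an edge-by-edge count that this bijection preserves $\pi$-mean, $L^{2}$-norm, and $D^{*}$, and then invoke the density of $\mathcal K$ from (\ref{dense}). The paper compresses the argument into a single chain of equalities (its (\ref{nd-3})) rather than your explicit $\le/\ge$ sandwich, and handles the re-centering implicitly by writing ``without loss of generality $f(i)=d$ on ${}_{(n-1)}E^{C}$,'' whereas you spell out the shift $f_{0}\mapsto f_{0}-\pi(f_{0})$; these are cosmetic differences, not substantive ones.
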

\begin{proof}
Since $\bar{Q}$ is regular, according to (\ref{D-Q}) and (\ref{dense}), we have
\begin{eqnarray}
\nonumber  \lambda(D) &=& \inf \left\{D^*(f): \ \|f\|_{\pi,2}=1,\ \pi(f)=0 \right\}\\
\label{nd-1}&=& \inf \left\{D^*(f): f\in\mathcal{K},\ \|f\|_{\pi,2}=1,\ \pi(f)=0 \right\},
\end{eqnarray}
where $\mathcal{K}$ is the set of all functions on $E$ with finite support.
Without loss of generality, support that $f(i)=d$, $i\in{_{(n-1)}E^C}$, for some $n>1$, where $d$ is a constant.
The rest of the proof is modified from the proof of Theorem 9.20 in \cite{Chen2004}.
It is simple to show that $\pi(f)=0$, $\|f\|_{\pi,2}=1$ if and only if ${ }_{(n)}\widetilde{\pi}({_{(n)}{f}})=0$, $\|{_{(n)}{f}}\|_{{ }_{(n)}\widetilde{\pi},2}=1$.
Now, let
\[{ }_{(n)}{D}^{\ast}({f})=\frac{1}{2}\sum_{i,j\in{_{(n)}E}}{ }_{(n)}\widetilde{\pi}(i){ }_{(n)}{\widetilde{Q}}(i,j)(f(i)-f(j))^2, \ \  \forall f\in{\mathcal{L}^2({ }_{(n)}\widetilde{\pi})}.\]
Furthermore, we have
\begin{eqnarray*}\label{nd-2}
 \nonumber {D}^{\ast}(f) &=& \frac{1}{2}\sum_{i,j\in E} \pi(i){Q}(i,j)\left(f(j)-f(i)\right)^2 \\
\nonumber   &=& \frac{1}{2}\sum_{i\in { }_{(n-1)}E}\pi(i)\sum_{j\in { }_{(n-1)} E}{Q}(i,j)\left(f(j)-f(i)\right)^2+\frac{1}{2}\sum_{i\in { }_{(n-1)}E}\pi(i)\sum_{j\in { }_{(n-1)} E^{c}}{Q}(i,j)\left(d-f(i)\right)^2\\
\nonumber   &&+\frac{1}{2}\sum_{i\in { }_{(n-1)}E^C}\pi(i)\sum_{j\in { }_{(n-1)} E}{Q}(i,j)\left(d-f(j)\right)^2\\
   \nonumber&=& \frac{1}{2}\sum_{i\in { }_{(n-1)}E}{ }_{(n)}\widetilde{\pi}(i)\sum_{j\in { }_{(n-1)}E}{ }_{(n)}{\widetilde{Q}}(i,j)\left(f(j)-f(i)\right)^2+\frac{1}{2}\sum_{i\in { }_{(n-1)}E}{ }_{(n)}\widetilde{\pi}(i){ }_{(n)}{\widetilde{Q}}(i,n)\left(d-f(i)\right)^2\\
      \nonumber&&+\frac{1}{2} \sum_{i\in { }_{(n-1)}E}{ }_{(n)}\tilde{\pi}(n){ }_{(n)}{\widetilde{Q}}(n,i)\left(d-f(i)\right)^2\\
  \nonumber &=&\frac{1}{2} \sum_{i\in { }_{(n)}E}{ }_{(n)}\widetilde{\pi}(i)\sum_{j\in { }_{(n)}E}{ }_{(n)}{\widetilde{Q}}(i,j)\left({ }_{(n)}{f}(j)-{ }_{(n)}{f}(i)\right)^2\\
   &=&{ }_{(n)}{D}^{\ast}({_{(n)}{f}}).
\end{eqnarray*}
From $(\ref{nd-1})$ and the above equation, we know that
\begin{eqnarray}\label{nd-3}
% \nonumber % Remove numbering (before each equation)
\nonumber  {\lambda}({Q}) &=& \inf \left\{{D}^{\ast}(f):\pi(f)=0,\|f\|_{\pi,2}=1, f(i)=d\ \text{for}\ i\in{_{(n-1)}E^C}\ \text{and some}\ n>1 \right\} \\	
\nonumber   &=& \lim_{n \rightarrow \infty} \inf \left\{{D}^{\ast}(f):\pi(f)=0,\|f\|_{\pi,2}=1, f(i)=d\ \text{for}\ i\in{_{(n-1)}E^C}\ \text{and some}\ n>1 \right\}\\
 \nonumber &=& \lim_{n \rightarrow \infty} \inf \left\{{ }_{(n)}{D}^{\ast}({_{(n)}{f}}):{ }_{(n)}\widetilde{\pi}({_{(n)}f})=0,\|{_{(n)}f}\|_{{_{(n)}\widetilde{\pi}},2}=1 \right\}\\
   &=&\lim_{n \rightarrow \infty}{\lambda}({ }_{(n)}\widetilde{Q}).
\end{eqnarray}
Thus, combining with (\ref{D-Q}) and (\ref{nd-3}), we obtain the assertion immediately.
	\end{proof}	

\subsection{Infinite state CTMCs}\label{ssub-3}

We now finish the proof of Theorem \ref{main result} by extending Hoeffding's inequality from finite state CTMCs to infinite state CTMCs.

\begin{proof}[Proof of Theorem \ref{main result}]
According to Theorem \ref{theroem-1}, for any bounded function $g:E\to [a,b]$ and $n\geq 1$, we have
	\begin{equation}\label{main-reslute-1}
		\mathbb{P}_{_{(n)}\widetilde{\pi}}\left(\frac{1}{t} \int_{0}^{t} {_{(n)}\widetilde{g}}\left({ }_{(n)}\widetilde{X}_{s}\right)\mathrm{d}s-{ }_{(n)}\pi ({_{(n)}\widetilde{g}}) \geq \varepsilon \right) \leq \exp\left\{-\frac{{\lambda}({ }_{(n)}\widetilde{Q})t}{(b-a)^2}\varepsilon^2 \right\}.
	\end{equation}
Then, applying Lemmas \ref{3-1}-\ref{3-2} and Fatou's Lemma, and letting $n\to \infty$ to inequality (\ref{main-reslute-1}), we have
\begin{eqnarray*}
			&&\mathbb{P}_{\pi}\left(\frac{1}{t} \int_{0}^{t} g\left(X_{s}\right)\mathrm{d}s-\pi (g) \geq \varepsilon \right)\\
             &=&\sum_{i\in{E}}\pi(i)\mathbb{P}_{i}\left(\frac{1}{t} \int_{0}^{t} g\left(X_{s}\right)\mathrm{d}s-\pi (g) \geq \varepsilon \right)\\
             &=&\sum_{i\in{E}}\lim_{n\rightarrow\infty}\mathbf{1}_{\{i\in{_{(n)}\widetilde{E}}\}}{_{(n)}\widetilde{\pi}(i)}\mathbb{P}_{i}\left(\frac{1}{t} \int_{0}^{t}\lim_{n \rightarrow \infty} {_{(n)}\widetilde{g}}\left({ }_{(n)}X_{s}\right)\mathrm{d}s-\lim_{n \rightarrow \infty}{ }_{(n)}\pi ({_{(n)}\widetilde{g}}) \geq \varepsilon \right)\\
             &\leq&\liminf_{n\rightarrow\infty}\sum_{i\in{_{(n)}\widetilde{E}}}{_{(n)}\widetilde{\pi}(i)}  \mathbb{P}_{i}\left(\frac{1}{t} \int_{0}^{t} {_{(n)}\widetilde{g}}\left({ }_{(n)}X_{s}\right)\mathrm{d}s-{ }_{(n)}\pi ({_{(n)}\widetilde{g}}) \geq \varepsilon \right)\\
			&=&\liminf_{n\rightarrow\infty}\mathbb{P}_{_{(n)}\widetilde{\pi}}\left(\frac{1}{t} \int_{0}^{t} {_{(n)}\widetilde{g}}\left({ }_{(n)}X_{s}\right)\mathrm{d}s-{ }_{(n)}\pi ({_{(n)}\widetilde{g}}) \geq \varepsilon \right)\\
			&\leq& \mathop{\lim\inf}\limits_{n\to \infty} \exp\left\{-\frac{\lambda({ }_{(n)}\widetilde{Q})t\varepsilon^2}{(b-a)^2} \right\}\\
			&=&\exp\left\{\lim_{n \rightarrow \infty}-\frac{\lambda({ }_{(n)}\widetilde{Q})t\varepsilon^2}{(b-a)^2} \right\}\\
			&=&\exp\left\{-\frac{{\lambda}(Q)t\varepsilon^2}{(b-a)^2} \right\}.
\end{eqnarray*}
\end{proof}

\section{Extensions to Jump Processes}\label{sec 4}

In this section, let $E$ be a general state space and $\mathcal{B}$ represent the associated {$\sigma$}-algebra.
Let $\{X_t,t\geq0\}$ be an irreducible time-homogeneous continuous-time Markov process on $E$ with the transition function  $P_t=(P_t(x,A))_{x\in{E},A\in{\mathcal{B}}}$ and the stationary distribution $\pi$.
We assume that $P_t$ is continuous at the origin, i.e., $X_t$ is a jump process. Please refer  to \cite{Chen2004} for more details about jump processes.
Furthermore, we focus on the case that $X_t$ has a $q$-pair $(q(x),q(x,A))$, i.e., for any
$x\in{E}$, $A\in\mathcal{B}$, we have
\[
\lim_{t\rightarrow0}\frac{P_t(x,A)-\mathbf{1}_{\{x\in{A}\}}}{t}=q(x,A)-q(x)\mathbf{1}_{\{x\in{A}\}},\ \ x\in{E},A\in\mathcal{B}.
\]
Furthermore, we assume that this $q$-pair $(q(x),q(x,A))$ is  totally stable, conservative and regular.
The infinitesimal generator $\mathcal{A}$ generated by $P_t$ is defined by (\ref{infinitestimal}). We define
\[
		\lambda(q):=\inf \left\{-\langle	{\mathcal{A}} f,f \rangle_{\pi}: f\in \mathcal{D}({\mathcal{A}}),\ \|f\|_{\pi,2}=1,\ \pi(f)=0 \right\},
\]
where $\mathcal{D}(\mathcal{A})$ is the domain of $\mathcal{A}$. If $\lambda(q)>0$, $\lambda(q)$ is said to be the $\mathcal{L}^{2}(\pi)$-spectral gap of $X_t$.

By the technique of skeleton chains, we can obtain the following result similar to Corollary \ref{corollary-1}.
\begin{lemma}\label{lemma-1-ctmc}
Assume that the process $X_t$ exists an $\mathcal{L}^{2}(\pi)$-spectral gap $\lambda(q)$, and that the $q$-pair $(q(x),q(x,A))$ is uniformly bounded.
Then for any bounded continuous function $g:E\to [a,b]$, and $t, \varepsilon >0$, we have
\[
\mathbb{P}_{\pi}\left(\frac{1}{t} \int_{0}^{t} g\left(X_{s}\right)\mathrm{d}s-\pi(g) \geq \varepsilon \right) \leq\exp\left\{-\frac{\lambda(q)t\varepsilon^2}{(b-a)^2}\right\}.
\]
\end{lemma}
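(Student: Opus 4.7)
The plan is to follow the proof of Theorem \ref{theroem-1} almost verbatim, using the $\delta$-skeleton chain technique together with Theorem \ref{main result dtmc}, since the uniform boundedness of the $q$-pair plays the same role for jump processes on a general state space as the uniform boundedness of $Q$ does in Corollary \ref{corollary-1}. First I would fix $t>0$, partition $[0,t]$ uniformly into $k$ pieces, and observe that for almost every sample path, the (càdlàg) function $s\mapsto g(X_s(\omega))$ is Riemann integrable, so
\[
\frac{1}{k}\sum_{i=0}^{k-1} g(X_{it/k}) \xrightarrow{a.s.} \frac{1}{t}\int_0^t g(X_s)\,\mathrm ds,
\]
where $\{X_{it/k},\,i\geq 0\}$ is a DTMC with transition kernel $P^{t/k}$ and stationary distribution $\pi$ (the continuity of $g$ assures the Riemann sums converge pointwise between jumps).

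Next I would expand the semigroup. Since the $q$-pair is uniformly bounded, the infinitesimal generator $\mathcal{A}$ coincides with the bounded operator $f\mapsto \int (f(y)-f(x))\,q(x,\mathrm dy)$ on $\mathcal{L}^2(\pi)$, so the Hille--Yosida series $P^{t/k}=\sum_{n\geq 0}(t/k)^n \mathcal{A}^n/n!$ converges in operator norm and yields
\[
P^{t/k}=I+\frac{t}{k}\mathcal{A}+o\!\left(\frac{1}{k}\right).
\]
Inserting this into the variational formula (\ref{right-spectral-gap}) and using the definition of $\lambda(q)$ gives, exactly as in (\ref{P-Q-1})--(\ref{P-Q-3}),
\[
\lambda(P^{t/k})=1-\frac{t}{k}\lambda(q)+o\!\left(\frac{1}{k}\right),
\]
so that for all sufficiently large $k$ we have $0<\lambda(P^{t/k})<1$ and Theorem \ref{main result dtmc} applies to the skeleton chain with $n=k$, yielding an upper bound of the same form as in (\ref{3,5}) with $\lambda(Q)$ replaced by $\lambda(q)$.

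Finally, Fatou's lemma applied to the almost-sure convergence of the Riemann sums gives
\[
\mathbb{P}_\pi\!\left(\frac{1}{t}\int_0^t g(X_s)\,\mathrm ds-\pi(g)\geq\varepsilon\right) \leq \liminf_{k\to\infty}\mathbb{P}_\pi\!\left(\frac{1}{k}\sum_{i=0}^{k-1}g(X_{it/k})-\pi(g)\geq\varepsilon\right),
\]
and passing $k\to\infty$ inside the exponential (using continuity of $\exp$ and $k\cdot o(1/k)\to 0$) collapses the DTMC bound to $\exp\{-\lambda(q)t\varepsilon^2/(b-a)^2\}$, as required.

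The main technical obstacle is verifying that the semigroup expansion and the spectral-gap identity $\lambda(P^{t/k})=1-(t/k)\lambda(q)+o(1/k)$ really go through on a general, possibly uncountable, state space. In the countable setting of Theorem \ref{theroem-1} this was transparent because $\mathcal{A}=Q$ is a matrix; here one must appeal to the uniform boundedness of $q(x)$ (which makes $\mathcal{A}$ a bounded linear operator on $\mathcal{L}^2(\pi)$, hence the exponential series converges and the $o(1/k)$ remainder is uniform in the unit ball of $\mathcal{L}^2(\pi)$) to take the supremum over $\{f:\|f\|_{\pi,2}=1,\,\pi(f)=0\}$ term by term. The remaining ingredients, namely sample-path regularity and Fatou's lemma, are standard and do not require the reversibility or symmetrization machinery used for the countable case.
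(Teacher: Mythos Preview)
Your proposal is correct and follows essentially the same route as the paper: the paper does not give a detailed proof of Lemma~\ref{lemma-1-ctmc} but simply states that it is obtained ``by the technique of skeleton chains \ldots\ similar to Corollary~\ref{corollary-1}'', which in turn is proved by the same argument as Theorem~\ref{theroem-1}. Your write-up makes explicit the one point the paper leaves implicit, namely that uniform boundedness of the $q$-pair makes $\mathcal{A}$ a bounded operator on $\mathcal{L}^2(\pi)$ so that the exponential series for $P^{t/k}$ converges in operator norm and the $o(1/k)$ remainder is uniform over the unit ball; the only caveat is that Theorem~\ref{main result dtmc} as stated in the paper is for a countable state space, so strictly speaking you (and the paper) are invoking the general-state-space version from Fan et al.\ \cite{fan2021hoeffding}.
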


Similar to the case of countable state space, define
\[D^{*}(f):=\frac{1}{2}\int_{x,y\in{E}}\pi(dx)q(x,dy)\left(f(x)-f(y)\right)^2,\ \ \forall{f}\in\mathcal{L}^2(\pi),\]
and
\[\mathcal{D}(D^{*}):=\left\{f \in \mathcal{L}^{2}(\pi):D^{*}(f)<\infty\right\},\]
where $\mathcal{D}(D^{*})$ is the domain of $D^{*}$.
Let $\mathcal{K}$ be the set of all functions on $E$ with finite support.
According to Theorem 9.11 in \cite{Chen2004}, we know that if  $\mathcal{K}$  is dense in $D^*$, then
\begin{equation*}\label{1}
\lambda(q)=\inf \left\{D^*(f): f\in\mathcal{K},\ \|f\|_{\pi,2}=1,\ \pi(f)=0 \right\}.
\end{equation*}

Now, let $\{{ }_{(n)}E,n=1,2,\ldots\}$ be  an increasing sequence compact subsets of $E$ such that
\begin{equation*}\label{pi(nEc)}
\int_{{ }_{(n)}E^C}\pi(dx)>0,\ \ n\geq1.
\end{equation*}
Furthermore, let $_{(n)}e={ }_{(n)}E^C$ as a new single-point state.
We consider the following collapsed $q$-pair $({_{(n+1)}\widetilde{q}},{_{(n+1)}\widetilde{q}(x,A)})$ on ${ }_{(n+1)}\widetilde{E}={ }_{(n)}E\cup\{{_{(n)}e}\}$,
which is given by
\begin{equation*}\label{nq-1}
{ }_{(n+1)}\widetilde{q}(x,A)=q(x,A\cap{{ }_{(n)}E})+\mathbf{1}_{\{{_{(n)}e}\in{A}\}}q(x,{{ }_{(n)}E^C}),\ \ x\in{{{ }_{(n)}E}},A\in{{ }_{(n+1)}\mathcal{B}},
\end{equation*}
\begin{equation*}\label{nq-3}
{ }_{(n+1)}\widetilde{q}({_{(n)}e},A)=\frac{\int_{{{ }_{(n)}E^C}}\pi(dx)q(x,A\cap{{ }_{(n)}E})}{\int_{{ }_{(n)}E^C}\pi(dx)},\ \ A\in{{ }_{(n+1)}\mathcal{B}},
\end{equation*}
and
\begin{equation*}\label{nQ-4}
{ }_{(n+1)}\widetilde{q}(x)={ }_{(n+1)}\widetilde{q}(x,{ }_{(n+1)}\widetilde{E}),\ \ x\in{{{ }_{(n)}E}},
\end{equation*}
where ${{ }_{(n+1)}\mathcal{B}}$ is the {$\sigma$}-algebra generated by ${{{ }_{(n+1)}\widetilde{E}}}$.
Then, let $\big\{{ }_{(n+1)}\widetilde{X}_{t}, t\geq 0\big\}$ be the jump process on ${ }_{(n+1)}\widetilde{E}$ corresponding to the $q$-pair $({_{(n+1)}\widetilde{q}},{_{(n+1)}\widetilde{q}(x,A)})$,
and ${ }_{(n+1)}\widetilde{\pi}$ be the stationary distribution of ${ }_{(n+1)}\widetilde{X}_{t}$ such that
${ }_{(n+1)}\widetilde{\pi}(A)=\int_{A\cap{{ }_{(n)}E}}\pi(dx)$ for ${_{(n)}e}\notin{A}$
and ${ }_{(n+1)}\widetilde{\pi}(A)=\int_{A\cap{{ }_{(n)}E}}\pi(dx)+\int_{{ }_{(n)}E^C}\pi(dx)$ for ${_{(n)}e}\in{A}$.
Finally, let ${_{(n+1)}\widetilde{g}}$ be a continuous truncation function satisfying ${ }_{(n+1)}\widetilde{g}(x)=g(x)$ for $x\in{_{(n)}E}$ and ${ }_{(n+1)}\widetilde{g}({_{(n)}e})=0$.

By following a similar proof technique as in Lemma \ref{3-1} and \ref{3-2}, we obtain the following convergence result.
\begin{lemma}\label{3-1=1}

For the collapsed process ${ }_{(n)}\widetilde{X}_{t}$, we have  that
\begin{description}
  \item[(i)] if $g$ is a bounded continuous function on $E$, then
  		$$
		\mathbb{P}_i\left(\omega\in \varOmega:\lim_{n \rightarrow \infty}{_{(n)}\widetilde{g}}\big({ }_{(n)}\widetilde{X}_{s}(\omega) \big)=g\left(X_{s}(\omega)\right) \right)=1,
		$$
and
		$$
		\lim_{n \rightarrow \infty} { }_{(n)}\widetilde{\pi}\left({ }_{(n)}\widetilde{g} \right)=\pi(g);
		$$	
  \item[(ii)] and if the original process $X_t$ admits an $\mathcal{L}^{2}(\pi)$-spectral gap $\lambda(q)$, and $\mathcal{K}$  is dense in $D^*$, then
  \[
	\lim_{n\rightarrow\infty}	{\lambda}({ }_{(n)}\widetilde{q}) = {\lambda}(q).
\]
\end{description}
	\end{lemma}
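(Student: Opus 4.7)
The plan is to mirror the arguments used for Lemmas \ref{3-1} and \ref{3-2} in the countable setting, with the main adjustments dictated by the general state space and the use of a continuous truncation $_{(n+1)}\widetilde{g}$. For part (i), the first step is to exploit regularity of the $q$-pair $(q(x),q(x,A))$, which forces the jump process $X_t$ to be non-explosive. Hence for any fixed $s>0$ the number of jumps $J(s,\omega)$ on $(0,s)$ is almost surely finite, and the set $M_s(\omega)$ of states visited by $X_t(\omega)$ on $[0,s]$ is almost surely a finite subset of $E$. Since $\{{ }_{(n)}E\}$ is an increasing sequence of compact subsets exhausting $E$, for $\mathbb{P}_i$-a.e.\ $\omega$ there exists $N=N(\omega,s)$ such that $M_s(\omega)\subset{ }_{(n)}E$ for all $n>N$. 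By construction of the collapsed $q$-pair, on the event that the path has stayed inside ${ }_{(n)}E$ up to time $s$ the collapsed process agrees with $X_t$, hence ${ }_{(n)}\widetilde{X}_t(\omega)=X_t(\omega)$ for $0\leq t\leq s$ and $n>N$. Since $_{(n+1)}\widetilde{g}(x)=g(x)$ on ${ }_{(n)}E$, the almost sure convergence of $_{(n)}\widetilde{g}\bigl({ }_{(n)}\widetilde{X}_s\bigr)$ to $g(X_s)$ follows at once (continuity of $g$ is not actually needed for this equality-in-the-limit statement). For the second convergence, one writes ${ }_{(n+1)}\widetilde{\pi}({ }_{(n+1)}\widetilde{g})=\int_{{ }_{(n)}E}g(x)\pi(dx)$ and applies dominated convergence with the bound $\max(|a|,|b|)$, using $\pi({ }_{(n)}E)\to\pi(E)$.

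For part (ii), the strategy is the Dirichlet form computation of Lemma \ref{3-2} recast on a general state space. Under the density hypothesis that $\mathcal{K}$ is dense in $\mathcal{D}(D^*)$, the characterization cited from Theorem 9.11 in \cite{Chen2004} gives
\[
\lambda(q)=\inf\bigl\{D^*(f):f\in\mathcal{K},\ \|f\|_{\pi,2}=1,\ \pi(f)=0\bigr\}.
\]
For $f\in\mathcal{K}$ there is some $n_0$ with $\mathrm{supp}(f)\subset{ }_{(n_0)}E$; for any $n\geq n_0$, extend $f$ to $_{(n+1)}\widetilde{E}$ by setting $_{(n+1)}f({_{(n)}e})=0$. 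Splitting the double integral defining $D^*(f)$ into the pieces $x,y\in{ }_{(n)}E$, $x\in{ }_{(n)}E,y\in{ }_{(n)}E^C$ and $x\in{ }_{(n)}E^C,y\in{ }_{(n)}E$ (the last block involving $x,y\in{ }_{(n)}E^C$ contributes zero because $f$ vanishes there), and using the definition of $_{(n+1)}\widetilde{q}({_{(n)}e},\cdot)$ together with $_{(n+1)}\widetilde{\pi}({_{(n)}e})=\int_{{ }_{(n)}E^C}\pi(dx)$, one verifies the identity $D^*(f)={ }_{(n+1)}D^*({_{(n+1)}f})$, where $_{(n+1)}D^*$ denotes the Dirichlet form of the collapsed $q$-pair. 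Simultaneously, $\pi(f)=0$ and $\|f\|_{\pi,2}=1$ translate exactly into ${ }_{(n+1)}\widetilde{\pi}({_{(n+1)}f})=0$ and $\|{_{(n+1)}f}\|_{_{(n+1)}\widetilde{\pi},2}=1$.

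Combining these identifications, for each $f\in\mathcal{K}$ one has $\lambda({ }_{(n+1)}\widetilde{q})\leq D^*(f)$ for all sufficiently large $n$, so taking the infimum over $\mathcal{K}$ yields $\limsup_n \lambda({ }_{(n+1)}\widetilde{q})\leq \lambda(q)$. The reverse inequality $\liminf_n \lambda({ }_{(n+1)}\widetilde{q})\geq\lambda(q)$ is obtained by considering a sequence of near-minimizers $f_n$ of the variational problem defining $\lambda({ }_{(n+1)}\widetilde{q})$ and lifting them back to $\mathcal{L}^2(\pi)$ by extending them as the constant $_{(n+1)}f_n({_{(n)}e})$ on ${ }_{(n)}E^C$; the same splitting computation shows that these lifts are admissible in the variational characterization of $\lambda(q)$ and have matching Dirichlet form. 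The main obstacle in executing this plan is precisely this Dirichlet form identity on a general state space: one must verify carefully that the weighted integrals over ${ }_{(n)}E^C$ collapse correctly to contributions from the single point $_{(n)}e$, which is the step where conservativity of the $q$-pair and the definition of $_{(n+1)}\widetilde{\pi}$ are used crucially. Once that identity is secured, the passage to the limit, combined with part (i), yields Theorem \ref{main result} in the general state-space setting by the same Fatou-type argument used in Subsection \ref{ssub-3}.
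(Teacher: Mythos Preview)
Your proposal is correct and follows essentially the route the paper indicates: the paper does not give an explicit proof of this lemma but simply states that it is obtained ``by following a similar proof technique as in Lemma \ref{3-1} and \ref{3-2}'', which is precisely what you carry out. Your $\limsup$/$\liminf$ split in part~(ii) is a slightly more explicit rendering of the chain of equalities in the paper's proof of Lemma~\ref{3-2}, where both directions are handled simultaneously by allowing the eventually-constant value $d$ on ${}_{(n-1)}E^C$ to be arbitrary rather than zero, so that functions on ${}_{(n)}\widetilde{E}$ correspond bijectively to functions on $E$ constant on ${}_{(n-1)}E^C$.
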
	

By combining Lemmas \ref{lemma-1-ctmc} and \ref{3-1=1}, and using a similar argument in the proof of Theorem \ref{main result},
we obtain the following Hoeffding's inequality for a jump process on a general state space.

\begin{theorem}
Assume that the process $X_t$ exists an $\mathcal{L}^{2}(\pi)$-spectral gap $\lambda(q)$, and that $\mathcal{K}$ is dense in $D^*$.
Then for any bounded continuous function $g:E\to[a,b]$, and $t, \varepsilon >0$, we have
\[
		\mathbb{P}_{\pi}\left(\frac{1}{t} \int_{0}^{t} g\left(X_{s}\right)\mathrm{d}s-\pi (g) \geq \varepsilon \right) \leq \exp\left\{-\frac{\lambda(q)t\varepsilon^2}{(b-a)^2} \right\}.
\]
\end{theorem}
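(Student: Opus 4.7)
The plan is to parallel the proof of Theorem~\ref{main result} in Section~\ref{ssub-3}, with the collapsed CTMCs replaced by the collapsed jump processes ${ }_{(n+1)}\widetilde{X}_t$ introduced just before Lemma~\ref{3-1=1}. Three ingredients are already in hand: a Hoeffding-type bound for bounded $q$-pairs (Lemma~\ref{lemma-1-ctmc}, applicable to each collapsed process), sample-path and stationary-mean convergence (Lemma~\ref{3-1=1}(i)), and spectral-gap convergence (Lemma~\ref{3-1=1}(ii)).

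Concretely, I would first apply Lemma~\ref{lemma-1-ctmc} to the collapsed process ${ }_{(n+1)}\widetilde{X}_t$ with the truncated test function ${ }_{(n+1)}\widetilde{g}$, whose range is still contained in $[a,b]$. This yields, for every $n$,
\begin{equation*}
\mathbb{P}_{{ }_{(n+1)}\widetilde{\pi}}\!\left(\tfrac{1}{t}\int_0^t { }_{(n+1)}\widetilde{g}\big({ }_{(n+1)}\widetilde{X}_s\big)\,\mathrm{d}s - { }_{(n+1)}\widetilde{\pi}\big({ }_{(n+1)}\widetilde{g}\big) \geq \varepsilon\right) \leq \exp\!\left\{-\frac{\lambda({ }_{(n+1)}\widetilde{q})\,t\,\varepsilon^2}{(b-a)^2}\right\}.
\end{equation*}
Mirroring the closing calculation in the proof of Theorem~\ref{main result}, I would then decompose $\mathbb{P}_\pi$ over the initial state, use the identity ${ }_{(n+1)}\widetilde{\pi}(A)=\pi(A\cap{ }_{(n)}E)$ for ${_{(n)}e}\notin A$ together with monotone convergence to recover $\mathbb{P}_\pi$, apply Lemma~\ref{3-1=1}(i) and bounded convergence to replace ${ }_{(n+1)}\widetilde{g}({ }_{(n+1)}\widetilde{X}_s)$ by $g(X_s)$ inside the time integral, and invoke Fatou's lemma to push the liminf inside the probability. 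Finally, Lemma~\ref{3-1=1}(ii) identifies $\lim_n \lambda({ }_{(n+1)}\widetilde{q}) = \lambda(q)$, producing the advertised exponent $\exp\{-\lambda(q)t\varepsilon^2/(b-a)^2\}$.

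The main obstacle is verifying that Lemma~\ref{lemma-1-ctmc} legitimately applies to each collapsed process, i.e.\ that the $q$-pair $({ }_{(n+1)}\widetilde{q},{ }_{(n+1)}\widetilde{q}(x,\cdot))$ is uniformly bounded on ${ }_{(n+1)}\widetilde{E}$. For $x\in{ }_{(n)}E$ this reduces to a local-boundedness condition $\sup_{x\in{ }_{(n)}E} q(x)<\infty$, which is reasonable on a compact set for a totally stable $q$-pair; the aggregated rate at the collapsed point ${_{(n)}e}$ is a $\pi$-weighted average of transitions from ${ }_{(n)}E^C$ back into ${ }_{(n)}E$ and requires parallel control, so this truncation should be chosen (or the hypothesis strengthened) to guarantee it. A secondary concern, also inherited from Section~\ref{ssub-1}, is ensuring that the Riemann-sum approximation of $\tfrac{1}{t}\int_0^t g(X_s)\,\mathrm{d}s$ behind the skeleton-chain argument genuinely holds almost surely on the path space of the collapsed jump process, which is exactly where the continuity of $g$ (rather than mere measurability) becomes essential.
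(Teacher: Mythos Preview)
Your proposal is correct and follows essentially the same approach as the paper: the paper's own ``proof'' is a single sentence stating that the theorem follows by combining Lemmas~\ref{lemma-1-ctmc} and~\ref{3-1=1} with the argument used for Theorem~\ref{main result}, which is precisely the route you outline (apply the bounded-$q$-pair inequality to each collapsed process, then pass to the limit via Fatou together with the convergence statements of Lemma~\ref{3-1=1}). Your write-up is in fact more detailed than the paper's, and the technical caveats you flag---uniform boundedness of the collapsed $q$-pair on ${}_{(n+1)}\widetilde{E}$ and the a.s.\ validity of the Riemann-sum approximation---are genuine points the paper leaves implicit rather than issues with your strategy.
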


\end{document}